\theoremstyle{definition}
\newtheorem{definition}{Definition}[section]
\theoremstyle{plain}
\newtheorem{lemma}[definition]{Lemma}
\newtheorem{thm}[definition]{Theorem}
\newtheorem{prop}[definition]{Proposition}
\numberwithin{equation}{section}
\newcommand{\metr}{\mathbf{d}}
\newcommand{\ent}{\mathcal{E}}
\newcommand{\fc}{\mathcal{F}}
\newcommand{\dd}{\,\mathrm{d}}
\newcommand{\eps}{\varepsilon}
\newcommand{\W}{\mathbf{W}}
\newcommand{\X}{\mathbf{X}}
\newcommand{\N}{{\mathbb{N}}}
\newcommand{\R}{{\mathbb{R}}}
\newcommand{\eins}[1]{\mathbf{1}_{#1}}
\newcommand{\scrC}{\mathscr{C}}
\newcommand{\meas}{\mathscr{M}}
\newcommand{\Y}{\mathbf{Y}}
\newcommand{\argmin}{\operatornamewithlimits{argmin}}
\newcommand{\rec}{\mathrm{rec}}
\newcommand{\Nx}{{N_{\Delta x}}}
\newcommand{\Nt}{{N_{\Delta t}}}
\newcommand{\Dx}{\Delta x}
\newcommand{\Dt}{\Delta t}
\newcommand{\uD}{u^\Delta}
\newcommand{\wD}{w^\Delta}
\renewcommand{\bar}{\overline}
\renewcommand{\hat}{\widehat}
\begin{document}

\begin{abstract}
We propose a fully discrete variational scheme for nonlinear evolution equations with gradient flow structure on the space of finite Radon measures on an interval with respect to a generalized version of the Wasserstein distance with nonlinear mobility. Our scheme relies on a spatially discrete approximation of the semi-discrete (in time) minimizing movement scheme for gradient flows. Performing a finite-volume discretization of the continuity equation appearing in the definition of the distance, we obtain a finite-dimensional convex minimization problem usable as an iterative scheme. We prove that solutions to the spatially discrete minimization problem converge to solutions of the spatially continuous original minimizing movement scheme using the theory of $\Gamma$-convergence, and hence obtain convergence to a weak solution of the evolution equation in the continuous-time limit if the minimizing movement scheme converges. We illustrate our result with numerical simulations for several second- and fourth-order equations.
\end{abstract}

\title[Discrete approximation of the minimizing movement scheme]{Discrete approximation of the minimizing movement scheme for evolution equations of Wasserstein gradient flow type with nonlinear mobility}
\author[Jonathan Zinsl]{Jonathan Zinsl}
\address{Zentrum f\"ur Mathematik \\ Technische Universit\"at M\"unchen \\ 85747 Garching, Germany}
\email{jonathanzinsl@aol.com}
\author[Daniel Matthes]{Daniel Matthes}
\address{Zentrum f\"ur Mathematik \\ Technische Universit\"at M\"unchen \\ 85747 Garching, Germany}
\email{matthes@ma.tum.de}
\keywords{Gradient flow, minimizing movement scheme, modified Wasserstein distance, nonlinear mobility, spatial discretization, continuity equation, $\Gamma$-convergence}
\date{\today}
\subjclass[2010]{Primary: 35K52; Secondary: 35A15, 49J20, 65K10, 65M06}
\thanks{This research has been supported by the DFG Collaborative Research Center TRR 109 ``Discretization in Geometry and Dynamics''}

\maketitle

\section{Introduction}\label{sec:intro}
In this article, we introduce a fully discrete variational scheme for nonlinear evolution equations in one spatial dimension of the form
\begin{align}\label{eq:pde}
\partial_t u(t,x)&=\partial_x\left(m(u(t,x))\partial_x \frac{\delta\ent}{\delta u}(u(t,x))\right),
\end{align}
where $t>0$ and $x\in J$ with $J\subset\R$ an interval. Without loss of generality, we put $J=[0,1]$. Our sought solutions to equation \eqref{eq:pde} are nonnegative, satisfy the no-flux and Neumann boundary conditions
\begin{align}\label{eq:bcd}
\partial_x u(t,1)&=0=\partial_x u(t,0),\\
m(u(t,1))\partial_x \frac{\delta\ent}{\delta u}(u(t,1))&=0=m(u(t,0))\partial_x \frac{\delta\ent}{\delta u}(u(t,0)),
\end{align}
for all $t>0$, as well as the initial condition
\begin{align}\label{eq:icd}
u(0,\cdot)=u_0,
\end{align}
for a $u_0$ to be specified more in detail below.

Various second- and fourth-order evolution equations of the form \eqref{eq:pde} have been interpreted as gradient flows in spaces of measures w.r.t. the $L^2$-Wasserstein distance or its generalized versions \cite{dns2009, lisini2010, zm2014}, see for instance \cite{jko1998, otto2001, blanchet2008, gianazza2009, matthes2009, lisini2009, savare2008, lisini2012, loibl2015, zinsl2016, lisini2016, zinsl2016b}. There, the cornerstone in the proof of existence of (weak) solutions is the \emph{minimizing movement scheme} \cite{jko1998}, a time-discrete variational problem in a suitably chosen metric space $(\X,\metr)$, serving as a time-discrete approximation of the respective solution: given a suitable initial datum $u_0$ and a (small) step size $\tau>0$, define a sequence $(u_\tau^k)_{k\in\N}$ recursively by $u_\tau^0=u_0$ and
\begin{align}\label{eq:mms}
u_\tau^k\in\argmin_{u\in\X}\left(\frac1{2\tau}\metr^2(u,u_\tau^{k-1})+\ent(u)\right),\quad\text{for }k\in\N.
\end{align}
For the systems in the above-mentioned references, the free energy $\ent$, the mobility $m$ and the initial datum $u_0$ are such that the the piecewise constant (in time) interpolation $u_\tau$ along the sequence $(u_\tau^k)_{k\in\N}$ converges to a weak solution to the respective evolution equation \eqref{eq:pde} as $\tau\searrow 0$, in the sense stated in condition (MMS) below.

In this work, we make use of this property to set up a numerical scheme for \eqref{eq:pde}. In order to preserve the structural properties of the gradient flow, we do not fully discretize equation \eqref{eq:pde} itself, but spatially discretize the variational scheme \eqref{eq:mms}. Naturally, the most involved task there is to introduce a suitable discrete surrogate of the metric space $(\X,\metr)$.

\subsection{Mobilities and functionals: main assumptions}\label{subsec:assump}
For the mobility function $m:\R_{\ge 0}\to \R_{\ge 0}$, we distinguish two different cases, depending on the support of $m$. Let $M\in\R_{>0}\cup\{+\infty\}$. We always require
\begin{align}\label{eq:M}
\tag{M}
\begin{split}
m&\in C^2(0,M);\\
m(0)&=\lim_{z\searrow 0}m(z)=0,~\text{and if $M<\infty$,}~m(M)=\lim_{z\nearrow M}m(z)=0;\\
m(z)&>0~\text{and}~m''(z)\le 0~\text{for all}~ z\in (0,M).
\end{split}
\end{align}
With \eqref{eq:M}, one can endow the space $\X=\meas^+(J)$ of positive Radon measures on $J$ with the distance $\W_m$ from \cite{dns2009,lisini2010}:
\begin{align}
\W_m(\hat u,\check u)=\inf\bigg\{\int_0^1\Phi(u(t),w(t))\dd t:~(u,w)\in\scrC,~u(0)=\hat u,~u(1)=\check u\bigg\}^{1/2},
\end{align}
where the set $\scrC$ and the \emph{action functional} $\Phi$ are defined as follows (see \cite{dns2009,lisini2010} for more details).

For a given set $A\subset\R^d$, $\meas^+(A)$ and $\meas(A)$ denote the space of positive and signed Radon measures on $A$, respectively. Clearly, if $A$ is compact, elements in $\meas^+(A)$ and $\meas(A)$ are finite measures. Writing $I:=[0,1]$, $\scrC$ is the set of all pairs $(u,w)\in[\meas(I\times J)]^2$, where $(u(t))_{t\in I}$ is a Borel-measurable family in $\meas^+(J)$ and $(w(t))_{t\in I}$ is a Borel-measurable family in $\meas(J)$, such that the \emph{continuity equation} $\partial_t u=-\partial_x w$ (with the no-flux boundary condition $w(t,1)=0=w(t,0)$ for all $t>0$) is satisfied in the sense of distributions: for all $\varphi\in C^1(I\times J)$, one has
\begin{align}\label{eq:contdist}
&-\int_I\int_J\partial_t(t,x)\varphi \dd u(t,x)-\int_I\int_J \partial_x \varphi(t,x)\dd w(t,x)+\int_J\varphi(1,x)\dd u(1)(x)-\int_J\varphi(0,x)\dd u(0)(x)=0.
\end{align}
For the definition of the action functional $\Phi$, we first define the action density $\phi:\R_{\ge 0}\times \R\to\R_{\ge 0}\cup\{+\infty\}$ by
\begin{align}\label{eq:adens}
\phi(z,v)&=\begin{cases}\frac{v^2}{m(z)}&\text{if }z\in(0,M),\\ 0&\text{if }z\in\partial(0,M)~\text{and}~v=0,\\ +\infty&\text{else},\end{cases}
\end{align}
and recall \cite{dns2009} that $\phi$ is convex and lower semicontinuous. Thus, its recession function $\phi^\rec:\R_{\ge 0}\times \R\to\R\cup\{+\infty\}$ can be defined as in \cite[Def. 2.32]{ambrosio2000}:
\begin{align}\label{eq:rec}
\phi^\rec(z,v)=\lim_{s\to\infty}\frac{\phi(z_0+sz,v_0+sv)-\phi(z_0,v_0)}{s},
\end{align}
for arbitrary $(z_0,v_0)$ such that $\phi(z_0,v_0)<\infty$.
Now, given $u\in\meas^+(J)$ and $w\in\meas(J)$, we set
\begin{align}\label{eq:afunc}
\Phi(u,w)=\int_J\phi(u^\ll,w^\ll)\dd x+\int_J\phi^\rec\left(\frac{\dd (u^\perp,w^\perp)}{\dd |(u^\perp,w^\perp)|}\right)\dd |(u^\perp,w^\perp)|,
\end{align}
where $u=u^\ll+u^\perp$ and $w=w^\ll+w^\perp$ are the Lebesgue decompositions of $u$ and $w$. By a slight abuse of notation, we frequently identify measures which are absolutely continuous w.r.t. the Lebesegue measure (e.g. $u^\ll$ and $w^\ll$) on a certain set with their corresponding Lebesgue density.

Our results cover both cases $M<\infty$ and $M=+\infty$ as well as evolution equations \eqref{eq:pde} of second and of fourth order. We present our assumptions on $m$ and $\ent$ in the following.

\subsubsection{Mobilities}
If $M=+\infty$, the mobility function $m$ is required to satisfy one of the following two conditions: either,
\begin{align}\label{eq:wmob}
\tag{W}
m(z)=\bar m z\quad\text{for some constant }\bar m>0,
\end{align}
i.e. $\W_m$ is a scalar multiple of the classical $L^2$-Wasserstein distance \cite{brenier2000}, or
\begin{align}\label{eq:slmob}
\tag{SL}
\lim_{z\to\infty}\frac{m(z)}{z}=0,
\end{align}
i.e. $m$ grows sublinarly at $+\infty$. The paradigmatic examples for sublinear mobilities are power functions $m(z)=Cz^\alpha$ for $\alpha \in(0,1)$ and $C>0$.

If $M<\infty$, no further restrictions are imposed. In this case, the paradigmatic examples are given by $m(z)=C z^{\alpha_1}(M-z)^{\alpha_2}$ for $\alpha_1,\alpha_2 \in(0,1]$ and $C>0$.

\subsubsection{Second-order equations}
We consider energy functionals $\ent:\meas^+(J)\to\R\cup\{\infty\}$ of the form 
\begin{align}\label{eq:E2}
\tag{E2}
\ent(u)=\int_J E(u(x))\dd x+\int_J V(x)\dd u(x),
\end{align}
when $E(u)\in L^1(J)$; otherwise, $\ent(u)=+\infty$. The internal energy density $E:\R_{\ge 0}\to\R_{\ge 0}$ is assumed to be nonnegative, convex and continuous at $0$ (hence, $E$ is continuous on $\R_{\ge 0}$). If $M=+\infty$, we further require the following growth conditions when $E$ is not identically $0$:
\begin{align}
&\lim_{z\to\infty}\frac{E(z)}{z}=+\infty,\label{eq:Egrowth}\\
&\exists\,C>0\quad\forall\,z_1,z_2\ge 0:~E(z_1+z_2)\le C(E(z_1)+E(z_2)+1).\label{eq:doubling}
\end{align}
The superlinear growth condition \eqref{eq:Egrowth} ensures lower semicontinuity and stability of absolute continuity under weak$\ast$-convergence in $\meas^+(J)$ \cite{ambrosio2000}. The doubling condition \eqref{eq:doubling}---satisfied in all analytically interesting settings---is of more technical nature. The external potential $V:J\to \R$ is assumed to be H\"older-continuous. Equation \eqref{eq:pde} thus has the form of a nonlinear Fokker-Planck type equation:
\begin{align*}
\partial_t u=\partial_x(m(u)\partial_x (E'(u)+\partial_x V)).
\end{align*}

\subsubsection{Fourth-order equations}
Here, we consider energy functionals $\ent:\meas^+(J)\to\R\cup\{\infty\}$ of the form 
\begin{align}\label{eq:E4}
\tag{E4}
\ent(u)=\int_J G(\partial_x u(x))\dd x+\int_J E(u(x))\dd x+\int_J V(x)\dd u(x),
\end{align}
when $u\in H^1(J)$ (again, set $\ent(u)=+\infty$ otherwise). The gradient-dependent part $G:\R\to\R_{\ge 0}$ of the density shall be nonnegative and uniformly convex---take e.g. $G(p)=\frac12 p^2$ which yields the classical Dirichlet energy. For $E$ and $V$, we assume continuity. Equation \eqref{eq:pde} then reads as
\begin{align*}
\partial_t u=-\partial_x(m(u)\partial_x^2 G'(\partial_x u))+\partial_x(m(u)\partial_x (E'(u)+\partial_x V)),
\end{align*}
which comprises e.g. the classical Cahn-Hilliard and thin film equations.

\subsection{Discretization}
We now introduce our spatially discretized version of the minimizing movement scheme \eqref{eq:mms} and present our main results. Recall that our starting point is a single step of the minimizing movement scheme \eqref{eq:mms}, that is, the minimization problem
\begin{align}\label{eq:mms2}
\frac1{2\tau}\W_m(\hat u,\check u)^2+\ent(\check u)\longrightarrow\min\quad\text{w.r.t. } \check u\in\meas^+(J).
\end{align}
We assume---in addition to the requirements from Section \ref{subsec:assump}---that $\tau>0$ and $\hat u\in\meas^+(J)$ are such that the problem \eqref{eq:mms2} above has at least one solution. Since, by definition,
\begin{align}
\W_m(\hat u,\check u)^2=\inf\bigg\{\int_I\Phi(u(t),w(t))\dd t:~(u,w)\in\scrC,~u(0)=\hat u,~u(1)=\check u\bigg\},
\end{align}
we can rephrase the minimizing movement problem \eqref{eq:mms2} equivalently as follows:
\begin{align}\label{eq:mmscomb}
\frac{1}{2\tau}\int_I \Phi(u(t),w(t))\dd t+\ent(u(1))\longrightarrow\min\quad\text{w.r.t. } (u,w)\in\scrC,~u(0)=\hat u.
\end{align}
A simplified version of this problem has already been studied in \cite{burger2010}, where only fluxes $w$ of the particular form $w(t)=u(0)v(t)$ have been considered.

In order to avoid vanishing densities $u$, we consider at first the following regularized version of \eqref{eq:mmscomb}: for $\eps\in (0,1)$, define the regularized action density $\phi_\eps:\R_{\ge 0}\times\R\to\R_{\ge 0}\cup\{\infty\}$ by
\begin{align}\label{eq:regadens}
\phi_\eps(z,v)&=\begin{cases}\frac{v^2+\eps}{m(z)}&\text{if }z\in(0,M),\\ +\infty&\text{else}.\end{cases}
\end{align}
Since $z\mapsto (m(z))^{-1}$ is convex and nonnegative on $(0,M)$ thanks to assumption \eqref{eq:M}, $\phi_\eps$ is convex, and also lower semicontinuous. One easily verifies that its recession function $\phi_\eps^\rec$ coincides with $\phi^\rec$. Thus, the associated action functional $\Phi_\eps$ is given by
\begin{align}\label{eq:regafunc}
\Phi_\eps(u,w)=\int_J\phi_\eps(u^\ll,w^\ll)\dd x+\int_J\phi^\rec\left(\frac{\dd (u^\perp,w^\perp)}{\dd |(u^\perp,w^\perp)|}\right)\dd |(u^\perp,w^\perp)|,
\end{align}
and our regularized minimization problem reads
\begin{align}\label{eq:mmscombeps}
\frac{1}{2\tau}\int_I \Phi_\eps(u(t),w(t))\dd t+\ent(u(1))\longrightarrow\min\quad\text{w.r.t. } (u,w)\in\scrC,~u(0)=\hat u^\eps,
\end{align}
where 
\begin{align*}
\hat u^\eps:=\begin{cases}\hat u+\eps&\text{if }M=+\infty,\\\hat u+\eps\left(1-\frac2{M}\hat u\right)&\text{if }M<\infty.\end{cases}
\end{align*}

The idea of our discretization is as follows. We discretize the continuity equation according to the finite difference scheme on $I\times J$ and consider the associated family of piecewise constant interpolations $(u^\Delta,w^\Delta)_\Delta$ as the variables with respect to which the functional in \eqref{eq:mmscombeps} is to be minimized.

To this end, let $\Nx\in\N$ and $\Nt\in\N$ be the number of equally-sized spatial and temporal subintervals for $I$ and $J$ and let $\Dx=\frac1{\Nx}$ and $\Dt=\frac1{\Nt}$ be the associated spatial and temporal step sizes, respectively. Thus, the spatio-temporal domain $I\times J$ where the continuity equation is to be solved is decomposed in $N=\Nx\Nt$ rectangles of area $\Dx\Dt$ (i.e., an equidistant lattice). A pair of values $(\uD_{i,j},\wD_{i,j})$ is assigned to each cell ($i\in\{1,\ldots,\Nt\}$, $j\in\{1,\ldots,\Nx\}$). We use the abbreviations
\begin{align*}
I_i&:=((i-1)\Dt,i\Dt]\quad\text{for }i\in\{2,\ldots,\Nt\},\\
I_1&:=[0,\Dt],\\
J_j&:=((j-1)\Dx,j\Dx]\quad\text{for }j\in\{2,\ldots,\Nx\},\\
J_1&:=[0,\Dx].
\end{align*}
A discrete surrogate of the initial-boundary value problem
\begin{align*}
\partial_t u&=-\partial_x w \quad\text{in }(0,1)^2,\\
w(t,1)&=0=w(t,0)\quad\text{for all }t>0,\\
u(0,\cdot)&=\hat u^\eps,
\end{align*}
is the following system of linear equations in $\R^N\times\R^N$:
\begin{align}\label{eq:CE}
\tag{CE}
\begin{split}
(\uD_{i,j}-\uD_{i-1,j})\Dx+(\wD_{i,j+1}-\wD_{i,j})\Dt&=0\quad\forall i\in\{2,\ldots,\Nt\},\,j\in\{1,\ldots,\Nx-1\},\\
(\uD_{i,\Nx}-\uD_{i-1,\Nx})\Dx+(\wD_{i,1}-\wD_{i,\Nx})\Dt&=0\quad\forall i\in\{2,\ldots,\Nt\},\\
(\uD_{1,j}-\hat u^{\eps,\Delta}_j)\Dx+(\wD_{1,j+1}-\wD_{1,j})\Dt&=0\quad\forall j\in\{1,\ldots,\Nx-1\},\\
(\uD_{1,\Nx}-\hat u^{\eps,\Delta}_\Nx)\Dx+(\wD_{1,1}-\wD_{1,\Nx})\Dt&=0,\\
\wD_{i,1}&=0\quad\forall i\in\{1,\ldots,\Nt\},
\end{split}
\end{align}
where the piecewise constant approximation $(\hat u^{\eps,\Delta}_j)_{j=1,\ldots,\Nx}$ of the initial condition $\hat u^\eps$ is defined as
\begin{align}\label{eq:appic}
\hat u^{\eps,\Delta}_j=\frac1{\Dx}\int_{J_j}\dd \hat u^\eps,\quad\forall j\in\{1,\ldots,\Nx\}.
\end{align}
The densities $\uD$ and $\wD$ are now defined via piecewise constant interpolation, that is
\begin{align}\label{eq:pwc}
\begin{split}
\uD(t,x)&=\uD_{i,j}\quad\text{if $t\in I_i$ and $x\in J_j$ for some $i\in \{2,\ldots,\Nt\}$, $j\in\{1,\ldots,\Nx\}$},\\
\uD(t,x)&=\uD_{1,j}\quad\text{if $t\in I_1\setminus\{0\}$ and $x\in J_j$ for some $j\in\{1,\ldots,\Nx\}$},\\
\uD(0,x)&=\hat u^{\eps,\Delta}_j\quad\text{if $x\in J_j$ for some $j\in\{1,\ldots,\Nx\}$},\\
\wD(t,x)&=\wD_{i,j}\quad\text{if $t\in I_i$ and $x\in J_j$ for some $i\in \{1,\ldots,\Nt\}$, $j\in\{1,\ldots,\Nx\}$},\\
\end{split}
\end{align}
For the functional, we first observe that
\begin{align*}
\int_I \Phi_\eps(\uD(t),\wD(t))\dd t&=\Dt\Dx\sum_{i=1}^\Nt\sum_{j=1}^\Nx \phi_\eps(\uD_{i,j},\wD_{i,j}),\\
\int_J E(\uD(1,x))\dd x&=\Dx\sum_{j=1}^\Nx E(\uD_{\Nt,j}).
\end{align*}
The discrete counterpart of the potential energy reads as
\begin{align*}
\int_J V^\Delta(x)\uD(1,x)\dd x=\Dx\sum_{j=1}^\Nx V^\Delta_j \uD_{\Nt,j},
\end{align*}
where $V^\Delta$ is the piecewise constant function with
\begin{align}
V^\Delta(x)=V^\Delta_j=V((j-1)\Dx)\quad\text{if $x\in J_j$ for some $j\in\{1,\ldots,\Nx\}$}.
\end{align}
If $\ent$ also depends on the derivative $\partial_x u(1)$ via $G$, we replace $u(1)$ by the piecewise \emph{affine} interpolant $\bar \uD(1)$ along the values $(\uD_{\Nt,j})_{j=1,\ldots,\Nx}$, i.e.:
\begin{align*}
\bar\uD(1,x)&=\uD_{\Nt,1},\quad\text{if $x\in \left[0,\frac12 \Dx\right]$,}\\
\bar\uD(1,x)&=\frac{\uD_{\Nt,j+1}-\uD_{\Nt,j}}{\Dx},\quad\text{if $x\in J_j+\frac12\Dx$ for some $j\in\{1,\ldots,\Nx-1\}$},\\
\bar\uD(1,x)&=\uD_{\Nt,\Nx},\quad\text{if $x\in \left(1-\frac12 \Dx,1\right]$.}
\end{align*}
The discrete version of the gradient-dependent energy is
\begin{align*}
\int_J G(\partial_x\bar\uD(1,x))\dd x=\Dx\sum_{j=1}^{\Nx-1}G\left(\frac{\uD_{\Nt,j+1}-\uD_{\Nt,j}}{\Dx}\right).
\end{align*}
We subsume the energetic parts in the new functional $\ent^\Delta$:
\begin{align}
\ent^\Delta(\uD(1))=\int_J E(\uD(1,x))\dd x+\int_J V^\Delta(x)\uD(1,x)\dd x+\int_J G(\partial_x\bar\uD(1,x))\dd x
\end{align}

Then, our spatial discretization of the minimization problem \eqref{eq:mmscombeps} reads
\begin{align}\label{eq:mmsdisc}
\frac1{2\tau}\int_I \Phi_\eps(\uD(t),\wD(t))\dd t+\ent^\Delta(\uD(1))\longrightarrow\min,\quad\text{w.r.t. $(\uD_{i,j},\wD_{i,j})_{i,j}$ satisfying \eqref{eq:CE}.}
\end{align}

In the following section, we prove that under the condition $\ent(\hat u)<\infty$, minimizers to \eqref{eq:mmsdisc} converge (up to subsequences) to minimizers of \eqref{eq:mmscombeps}, as $\Delta=(\Dt,\Dx)\to 0$, and the latter converge to minimizers of \eqref{eq:mmscomb}, as $\eps\searrow 0$, thus to minimizers for the original minimizing movement scheme \eqref{eq:mms}. Our strategy of proof is based on $\Gamma$-convergence. 

The convergence of minimizers yields the applicability of \eqref{eq:mmsdisc} as a numerical scheme for solving \eqref{eq:pde} in the weak sense, given the gradient flow approach via \eqref{eq:mms} produces weak solutions in the following sense:
\begin{itemize}
\item[(MMS)] For every vanishing sequence $(\tau_k)_{k\in\N}$, the time-discrete solution $u_{\tau_k}$ defined via the minimizing movement scheme \eqref{eq:mms} exists, and there exists a subsequence and a distributional solution $u:\R_{\ge 0}\to\meas^+(J)$ to \eqref{eq:pde} with $\ent(u(t))\le \ent(u_0)$ (and satisfying $u(t,x)\in [0,M]$ for a.e. $x\in J$ if $M<\infty$) for all $t\ge 0$, such that $u_{\tau_k}(t)$ converges weakly$\ast$ to $u(t)$ as $k\to\infty$, pointwise w.r.t. $t\ge 0$. 
\end{itemize}
For the precise conditions on $\ent$ and $m$ which imply (MMS), we refer to the respective article in the bibliography below. Usually, they are more restrictive than our general assumptions from Section \ref{subsec:assump}.

With \eqref{eq:mmsdisc}, we are able to approximate the discrete solution $u_\tau$ from the original minimizing movement scheme \eqref{eq:mms} by iterating \eqref{eq:mmsdisc} for fixed $\eps$ and $\Delta$: 

Given $u_0\in\meas^+(J)$ with $\ent(u_0)<\infty$, $\tau>0$, $\eps\in(0,1)$, $\Nt\in\N$ and $\Nx\in\N$, define the sequence $(u_\tau^{\eps,\Delta,k})_{k\in\N}$ recursively by $u_\tau^{\eps,\Delta,0}=u_0^{\eps,\Delta}$ (according to \eqref{eq:appic} for $u_0$ in place of $\hat u$), and $u_\tau^{\eps,\Delta,k}=\uD(1)$, where $(\uD,\wD)$ is a solution to the minimization problem \eqref{eq:mmsdisc} with 
\begin{align*}
\hat u=\begin{cases}u_\tau^{\eps,\Delta,k-1}-\eps&\text{if }M=+\infty,\\ \frac{u_\tau^{\eps,\Delta,k-1}-\eps}{1-\frac{2\eps}{M}}&\text{if }M<\infty,\end{cases}
\end{align*}
for each $k\in\N$.
With this sequence, we can define the \emph{fully discrete function} $u_\tau^{\eps,\Delta}$ by piecewise constant interpolation (like $u_\tau$ is constructed from $(u_\tau^k)_{k\in\N})$.

Our main theorem on the convergence of $u_\tau^{\eps,\Delta}$ as $\tau,\eps,\Delta\to 0$ reads as follows.

\begin{thm}[Convergence of the scheme \eqref{eq:mmsdisc}]\label{thm:conv}
Assume that $m$ and $\ent$ are as in Section \ref{subsec:assump} and that condition \textup{(MMS)} holds. Let $u_0\in\meas^+(J)$ with $\ent(u_0)<\infty$ (and $u_0(x)\in [0,M]$ for a.e. $x\in J$ if $M<\infty$) be given. 

Let sequences $\tau_k\to 0$, $\eps_l\to 0$ and $\Delta_n\to 0$ be given and define, for each $k\in\N$, the family of fully discrete functions $(u^{\eps_l,\Delta_n}_{\tau_k})_{(l,n)\in\N^2}$ iteratively via \eqref{eq:mmsdisc}. Then, there exist subsequences $(\tau_{k_h})_{h\in\N}$, $(\eps_{l_h})_{h\in\N}$ and $(\Delta_{n_h})_{h\in\N}$ and a weak solution $u:\R_{\ge 0}\to\meas^+(J)$ to \eqref{eq:pde} in the sense from \textup{(MMS)} such that for each $t\ge 0$, $u^{\eps_{l_h},\Delta_{n_h}}_{\tau_{k_h}}(t)\rightharpoonup^\ast u(t)$ as $h\to\infty$. 
\end{thm}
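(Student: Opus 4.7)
The plan is to chain three approximation results through a Cantor-style triple diagonal argument. The three ingredients are: (i) for fixed $\tau>0$, $\eps\in(0,1)$ and a previous iterate $\hat u$, minimizers $(\uD,\wD)$ of the fully discrete problem \eqref{eq:mmsdisc} converge weakly$\ast$ along a subsequence as $\Delta\to 0$ to a minimizer of the $\eps$-regularized continuous problem \eqref{eq:mmscombeps}; (ii) as $\eps\searrow 0$, minimizers of \eqref{eq:mmscombeps} converge to minimizers of \eqref{eq:mmscomb}, equivalently of \eqref{eq:mms2}; (iii) by hypothesis (MMS), as $\tau\searrow 0$, the piecewise constant interpolant along the iterates of \eqref{eq:mms} converges pointwise in time and weakly$\ast$ to a weak solution of \eqref{eq:pde}. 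Both (i) and (ii) are $\Gamma$-convergence statements (to be established in the next section), combined with equi-coercivity to obtain convergence of minimizers; the equi-coercivity is inherited from the superlinear growth hypothesis \eqref{eq:Egrowth} in the $M=+\infty$ case and from the pointwise mass constraint $u\in[0,M]$ in the bounded-mobility case.

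Assuming (i)--(iii), the proof of Theorem~\ref{thm:conv} proceeds by induction on the iteration index $K\in\N$. For $K=0$, the initial projection \eqref{eq:appic} gives $u_0^{\eps_l,\Delta_n}\rightharpoonup^\ast u_0$ along the full sequences, independently of $l$ and $n$. Suppose inductively that subsequences (still denoted $\tau_{k_h},\eps_{l_h},\Delta_{n_h}$, with all three indices tending to infinity) have been chosen such that $u_{\tau_{k_h}}^{\eps_{l_h},\Delta_{n_h},j}\rightharpoonup^\ast u_{\tau_{k_h}}^{j}$ for every $j=0,\ldots,K-1$, where $u_{\tau_{k_h}}^j$ denotes the $j$-th iterate of the scheme \eqref{eq:mms} with step size $\tau_{k_h}$. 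By definition, $u_{\tau_{k_h}}^{\eps_{l_h},\Delta_{n_h},K}$ is the terminal value of a minimizer of \eqref{eq:mmsdisc} with $\hat u=u_{\tau_{k_h}}^{\eps_{l_h},\Delta_{n_h},K-1}$ (rescaled appropriately in the $M<\infty$ case). Applying (i) yields, along a further subsequence of $(\Delta_n)$, weak$\ast$ convergence to a minimizer $u_{\tau_{k_h}}^{\eps_{l_h},K}$ of \eqref{eq:mmscombeps}; then (ii) yields, along a further subsequence of $(\eps_l)$, weak$\ast$ convergence to $u_{\tau_{k_h}}^{K}$. A standard diagonal extraction merges all these subsequences into a single one for which the statement holds up to step $K$. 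The uniform energy decay $\ent^\Delta(u^{\eps,\Delta,K}_\tau)+\frac{1}{2\tau}\int_I\Phi_\eps(\uD,\wD)\dd t\le \ent^\Delta(\hat u)$ passes through (i) and (ii) by lower semicontinuity, so the chain closes.

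For the final limit $\tau_{k_h}\to 0$, invoking (MMS) on each of the sequences $(u_{\tau_{k_h}}^K)_{K\in\N}$ produces a further subsequence and a weak solution $u:\R_{\ge 0}\to\meas^+(J)$ such that the piecewise constant interpolation $u_{\tau_{k_h}}(t)\rightharpoonup^\ast u(t)$ for every $t\ge 0$. Combining this with a final diagonal extraction (choosing at stage $h$ the iteration index $K_h\to\infty$ slowly enough that $K_h\tau_{k_h}$ covers any prescribed time horizon, and that the weak$\ast$ convergence established in the previous paragraph holds for $j\le K_h$) yields $u_{\tau_{k_h}}^{\eps_{l_h},\Delta_{n_h}}(t)\rightharpoonup^\ast u(t)$ pointwise in $t$, which is the claim.

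The hard part will be item (i): proving the $\Gamma$-convergence of the spatially discretized action to its continuous regularized counterpart on the constraint set \eqref{eq:CE}. The recovery sequence must respect the finite-volume discretization of the continuity equation, while the lim-inf inequality has to handle simultaneously the possibly singular part of the limiting pair $(u,w)$, whose recession-function contribution in \eqref{eq:regafunc} is the same for $\phi_\eps$ and $\phi$. In contrast, the passage $\eps\searrow 0$ in (ii) is comparatively soft, since $\phi_\eps\searrow\phi$ monotonically on their common effective domain and the initial-data regularization $\hat u^\eps\to\hat u$ is explicit.
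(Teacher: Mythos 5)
Your proposal follows essentially the same route as the paper: the two ingredients (i) and (ii) are exactly the content of Theorem~\ref{thm:gc} ($\Gamma$-convergence plus the uniform compactness of minimizers in the set $K$, giving convergence of minimizers first as $\Delta\to 0$ and then as $\eps\searrow 0$), and Theorem~\ref{thm:conv} is then deduced by chaining these with (MMS) through nested diagonal extractions over the iteration index and the three parameters. The paper implements the diagonalization by metrizing weak$\ast$ convergence via a countable dense family $(f_r)$ in $C(J)$ with quantitative $1/k$ error bounds rather than by your induction on the iteration step, but this is a bookkeeping difference, not a different argument.
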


In principle, this variational scheme can also be extended to cover nonlocal terms in the free energy $\ent$, e.g. an interaction potential, as well as coupled systems (see \cite{zm2014}). Furthermore, at least for energies of the form \eqref{eq:E2}, it might be possible to generalize our ideas to the spatially multi-dimensional setting. This is postponed to future research.

\subsection{Related studies}
In the last years, several numerical approaches to Wasserstein-type gradient flows have been studied taking up the \emph{Lagrangian} point of view which is particularly useful for discretizing the classical Wasserstein distance because of the underlying optimal transport. Especially, the spatially one-dimensional case is even more exceptional since there, the Wasserstein distance between measures can be expressed as the plain $L^2$ distance between the corresponding inverse distribution functions. This property has been made use of various times in order to design fully discrete schemes on grounds of the minimizing movement scheme, see for instance \cite{kinderlehrer1999, cavalli2010, matthes2014, matthes2015, osberger2015, blanchet2008}, or by direct discretization of the evolution of the inverse distribution functions as in \cite{gosse2006}. Other Lagrangian-type schemes, also for multiple spatial dimensions, involve e.g. particle methods \cite{westdickenberg2010}, moving meshes \cite{budd1999} or discretization and evolution of optimal transport maps \cite{carrillomoll, junge2016}. However, in our case dealing with genuinely nonlinear mobility functions, approaches involving optimal transport or inverse distribution functions do not seem to be possible at the first glance. In contrast, schemes of \emph{Eulerian} type, such as finite volume methods, do neither rely on one-dimensionality of space nor on linearity of the mobility, but might not pass on the variational structure of the equation to the discretization. Structure-preserving finite difference, volume or element discretizations for Fokker-Planck type equations have been introduced e.g. in \cite{chf2007, bcf2012, bc2012, liu2014, carrillo2015, cances2016}, see also the references therein. Our approach mainly focusses on the discretization of the semi-discrete minimizing movement scheme when the mobility is nonlinear, a case which has seemingly not been considered up to now. Special classes of similar second-order equations with possibly nonlinear mobility have been fully discretized in \cite{bcf2012} using finite differences, but not relying on the particular metric gradient flow structure. For linear mobility, a method for approximating the Wasserstein distance via entropic regularization of optimal transport has been studied in \cite{peyre2015}. Without optimal transport theory available, the most appealing starting point for discretizations of the generalized Wasserstein distance $\W_m$ seems to be its very definition via the generalized Benamou-Brenier formula \cite{brenier2000}. A finite-element discretization employing a linearized version of the constraints appearing in the Benamou-Brenier formula for the classical Wasserstein distance has been introduced in \cite{burger2010}. Our method can be seen as a generalization: apart from allowing for nonlinear mobilities, we do not perform a linearization of the optimization problem for $\W_m$. Still, our method is structure-preserving, variational and completely elementary at its core, which makes it easy to be implemented. In contrast to \cite{bcf2012, burger2010}, we are also able to prove the convergence of our scheme (however, without specifying the rate). A numerical method relying on the Benamou-Brenier formula for the classical Wasserstein distance has also been introduced in the recent articles \cite{benamou2015,benamou2015b} where a combination of Galerkin or finite element methods for spatial discretization with an augmented Lagrangian method for solving the minimization problem in $\W_m$ has been investigated. Compared to \cite{benamou2015,benamou2015b}, our approach can be applied for a broader class of problems and seems to be more direct and less technical. Furthermore, our proof of convergence does neither rely on previous results on e.g. the convergence of finite element methods nor on certain regularity properties of the solution. Our scheme is applicable for a wide class of second---and notably also fourth---order evolution equations, amongst others Fokker-Planck type equations (possibly with nonlinear diffusion), the Cahn-Hilliard equation for phase separation and the thin film equation generating the Hele-Shaw flow.

\subsection{Outline of the paper}
Section \ref{sec:proof} is concerned with the proof of Theorem \ref{thm:conv}: first, we show a $\Gamma$-convergence property of the functionals associated with the minimization problems \eqref{eq:mmscomb}, \eqref{eq:mmscombeps} and \eqref{eq:mmsdisc} before the statement in Theorem \ref{thm:conv} is proved. In Section \ref{sec:num}, we illustrate the result with several numerical simulations covering the cases of linear and nonlinear mobilities as well as second- and fourth-order equations.

\section{Proof of convergence}\label{sec:proof}
Up to some diagonal arguments, Theorem \ref{thm:conv} follows from the convergence of solutions to the minimization problems \eqref{eq:mmscomb}, \eqref{eq:mmscombeps} and \eqref{eq:mmsdisc} as $\Delta\to 0$ and $\eps\to 0$. As a preparation to show $\Gamma$-convergence, we first introduce a suitable topological space before recalling the definition of the respective functionals more in detail.

Consider $\Y=\meas(I\times J)\times\meas(I\times J)$, endowed with the weak$\ast$ topology, and let $\tau>0$ and $\hat u\in\meas^+(J)$ such that $\ent(\hat u)<\infty$.

The functional to be minimized in \eqref{eq:mmscomb} is $\fc:\Y\to \R\cup\{\infty\}$ with
\begin{align}\label{eq:F}
\fc(u,w)=\frac1{2\tau}\int_I\int_J \phi(u^\ll,w^\ll)\dd x\dd t+\frac1{2\tau}\int_I\int_J \phi^\rec\left(\frac{\dd (u^\perp,w^\perp)}{\dd |(u^\perp,w^\perp)|}\right)\dd |(u^\perp,w^\perp)|(t,x)+\ent(u(1)),
\end{align}
for the Lebesgue decompositions $u=u^\ll+u^\perp$ and $w=w^\ll+w^\perp$, if $u(0)=\hat u$ and $(u,w)\in\scrC$; and $\fc(u,w)=\infty$ otherwise. 

Similarly, for all $\eps\in(0,1)$, let $\fc_\eps:\Y\to \R\cup\{\infty\}$ with
\begin{align}\label{eq:Feps}
\fc_\eps(u,w)=\frac1{2\tau}\int_I\int_J \phi_\eps(u^\ll,w^\ll)\dd x\dd t+\frac1{2\tau}\int_I\int_J \phi^\rec\left(\frac{\dd (u^\perp,w^\perp)}{\dd |(u^\perp,w^\perp)|}\right)\dd |(u^\perp,w^\perp)|(t,x)+\ent(u(1)),
\end{align}
for the Lebesgue decompositions $u=u^\ll+u^\perp$ and $w=w^\ll+w^\perp$, if $u(0)=\hat u^\eps$ and $(u,w)\in\scrC$; and $\fc_\eps(u,w)=\infty$ otherwise.

Finally, for all $\eps\in(0,1)$ and $\Nx,\Nt\in\N$, define $\fc_{\eps,\Delta}:\Y\to \R\cup\{\infty\}$ with
\begin{align}\label{eq:Fepsdel}
\fc_{\eps,\Delta}(u,w)=\frac1{2\tau}\int_I\int_J \phi_\eps(u,w)\dd x\dd t+\ent^\Delta(u(1)),
\end{align}
if $u$ and $w$ are given by piecewise constant interpolation via \eqref{eq:pwc}, and the corresponding family of values $(\uD_{i,j},\wD_{i,j})_{i,j}$ satisfies \eqref{eq:CE}; $\fc_{\eps,\Delta}(u,w)=\infty$ otherwise.

The main result of this section is
\begin{thm}[$\Gamma$-convergence and convergence of minimizers]\label{thm:gc}
Let $\hat u\in\meas^+(J)$ such that $\ent(\hat u)<\infty$ and $\tau>0$ be given. The following statements hold:
\begin{enumerate}[(a)]
\item For fixed $\eps\in(0,1)$, $\fc_{\eps,\Delta}\stackrel{\Gamma}{\rightharpoonup^\ast}\fc_\eps$ as $\Delta\to 0$.
\item For each $\eps\in(0,1)$ and $\Delta$, $\fc_{\eps,\Delta}$ possesses a minimizer $(u^{\eps,\Delta},w^{\eps,\Delta})$ on $\Y$ which is an element of the weakly$\ast$-compact set $K=\{(u,w)\in\Y|\int_I\int_J \dd u\le C,\,\int_I\int_J\dd |w|\le C\}$, where $C>0$ is a constant independent of $\eps$ and $\Delta$. Furthermore, for fixed $\eps\in(0,1)$, $(u^{\eps,\Delta},w^{\eps,\Delta})$ converges weakly$\ast$ (up to subsequences) as $\Delta\to 0$ to a minimizer $(u^\eps,w^\eps)\in K$ of $\fc_\eps$.
\item As $\eps\searrow 0$, $\fc_{\eps}\stackrel{\Gamma}{\rightharpoonup^\ast}\fc$.
\item If, for each fixed $\eps\in (0,1)$, $(u^\eps,w^\eps)\in K$ is a minimizer of $\fc_\eps$, there exists a subsequence such that $(u^\eps,w^\eps)$ converges weakly$\ast$ as $\eps\searrow 0$ to a minimizer $(u,w)\in K$ of $\fc$.
\end{enumerate}
\end{thm}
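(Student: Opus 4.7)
The plan is to handle the four parts as an interlocking package following the standard $\Gamma$-convergence paradigm: (i) establish existence of the discrete minimizers together with uniform compactness in $K$; (ii) prove the $\Gamma$-liminf and $\Gamma$-limsup inequalities that make up (a) and (c); (iii) deduce the convergence-of-minimizers statements---the tail of (b) and all of (d)---from the fundamental theorem of $\Gamma$-convergence. For existence in (b), $\fc_{\eps,\Delta}$ is a strictly convex, proper, lower semicontinuous functional on the finite-dimensional affine subspace of $\R^N \times \R^N$ cut out by \eqref{eq:CE}, and is coercive since $\phi_\eps(z,v) \ge \eps/m(z)$ forces $z$ to stay in a compact subset of $(0,M)$. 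For the uniform bound in $K$: summing \eqref{eq:CE} over $j$ and telescoping gives mass conservation $\sum_j \uD_{i,j}\Dx = \hat u^\eps(J)$ for each $i$, so $\int_I\int_J \dd u^\Delta$ is bounded uniformly in $\eps,\Delta$. Comparing $\fc_{\eps,\Delta}$ at the minimizer against the trivial competitor $(\hat u^{\eps,\Delta}, 0)$ and using $\ent(\hat u)<\infty$ gives $\int\int\phi_\eps \le C$. Since the concavity of $m$ together with $m(0)=0$ yields $m(z)\le C(1+z)$, Cauchy--Schwarz gives
\[
\int_I\int_J |w^{\eps,\Delta}| \le \Bigl(\int_I\int_J \phi_\eps(u^{\eps,\Delta}, w^{\eps,\Delta})\Bigr)^{1/2}\Bigl(\int_I\int_J m(u^{\eps,\Delta})\Bigr)^{1/2} \le C,
\]
uniformly in $\eps,\Delta$, providing the desired weak$^\ast$-compact set.

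For the $\Gamma$-liminf inequalities underlying (a) and (c), I would invoke Reshetnyak--Ioffe type lower semicontinuity for convex integrands on measures: $\phi_\eps$ and $\phi$ are convex, nonnegative, lower semicontinuous on $\R_{\ge 0}\times\R$, and $\phi_\eps\ge\phi$ pointwise, so along weakly$^\ast$ converging sequences the action functionals of the form \eqref{eq:regafunc}---including the recession-function contribution from any singular part---are weakly$^\ast$ lower semicontinuous. The linear continuity-equation constraint \eqref{eq:contdist} passes to weak$^\ast$ limits by testing with $\varphi\in C^1(I\times J)$, and consistency of \eqref{eq:CE} with \eqref{eq:contdist} is elementary. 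The energy $\ent^\Delta(\uD(1))$ passes to $\ent(u(1))$ via convex lsc plus, for the fourth-order case, weak $H^1$-lsc of the discrete gradient term (using uniform convexity of $G$). For the $\Gamma$-limsup in (a), I would construct a recovery sequence for smooth $(u,w)\in\scrC$ with $\fc_\eps(u,w)<\infty$ by cell-averaging, $(\uD_{i,j},\wD_{i,j})=(\Dt\Dx)^{-1}\int_{I_i\times J_j}(u,w)$, which satisfies \eqref{eq:CE} exactly, and where Jensen's inequality dominates the discrete action by the continuous one; for general $(u,w)$ I first mollify, then cell-average, then pass to the limit diagonally. For the $\Gamma$-limsup in (c), I would set $u^\eps = u + \delta_\eps$ (or an affine analogue when $M<\infty$) with $\delta_\eps\to 0$ and adjust $w$ by a correction supported near $t=0$ to enforce $u^\eps(0)=\hat u^\eps$; then $\int\int \phi_\eps(u^\eps,w^\eps)$ decomposes into a part $\int\int w^2/m(u^\eps)\to\int\int w^2/m(u)$ by dominated convergence and a remainder bounded by $\eps|I\times J|/m(\delta_\eps)$ which vanishes whenever $\delta_\eps$ is chosen so that $\eps/m(\delta_\eps)\to 0$.

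Part (d) together with the convergence-of-minimizers part of (b) then follow at once from the fundamental theorem of $\Gamma$-convergence combined with uniform compactness in $K$: any weak$^\ast$ cluster point of a sequence of minimizers of $\Gamma$-converging functionals is a minimizer of the limit. The main obstacle I anticipate is the $\Gamma$-limsup in (c) in the \emph{linear} mobility case \eqref{eq:wmob}, where $\eps/m(\eps)=1/\bar m$ does not vanish; here the naive choice $\delta_\eps=\eps$ leaves a non-negligible residual from any set where $u$ vanishes, so one must pick $\delta_\eps \gg \eps$ \emph{and} match the initial condition $\hat u^\eps = \hat u + \eps$ by installing a brief transition layer on $[0,t_\eps]$ whose compensating flux deposits the extra mass $\delta_\eps-\eps$ uniformly, and whose own action contribution vanishes with $t_\eps$. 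A secondary subtlety is matching the cell-averaged initial datum $\hat u^{\eps,\Delta}$ in the $\Gamma$-limsup for (a), which I would handle by an analogous flux correction in the first time strip only, absorbed into the limit without affecting the action estimate.
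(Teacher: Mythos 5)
Your overall architecture coincides with the paper's: existence and the uniform bound defining $K$ via mass conservation from \eqref{eq:CE} and comparison with the competitor $(\hat u^{\eps,\Delta},0)$ (your Cauchy--Schwarz bound with $m(z)\le C(1+z)$ is equivalent to the paper's Jensen estimate \eqref{eq:L1w2}); the two $\liminf$ inequalities via lower semicontinuity of convex integral functionals of measures (the paper cites \cite[Thm.~2.34]{ambrosio2000}, which is precisely the Reshetnyak--Ioffe statement you invoke) together with passage to the limit in the discrete continuity equation; cell averaging plus Jensen for the recovery sequence in (a); and the fundamental theorem of $\Gamma$-convergence for the convergence of minimizers in (b) and (d). The one genuine deviation in (a) is your preliminary mollification for general $(u,w)$. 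The paper does not mollify: by Lemma \ref{lem:rec}, finiteness of the action forces $w$ (in case \eqref{eq:slmob}) or $(u,w)$ (in case $M<\infty$) to be absolutely continuous, so cell averages are averages of honest densities and Jensen applies directly, while in case \eqref{eq:wmob} the positive $1$-homogeneity of $\phi$ lets one run Jensen at the level of measures, singular parts included. Your mollification step is the fragile point of your sketch: smoothing in $t$ perturbs both the constraint $u(0)=\hat u^\eps$ and the terminal value entering $\ent(u(1))$, and smoothing in $x$ near $\partial J$ perturbs the no-flux condition, so you would have to re-install all of these by hand --- exactly the corrections the paper's direct argument avoids.

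Your recovery sequence for (c) is a genuinely different route, and it addresses what is in fact the weakest point of the paper's own proof. The paper takes $u^\eps=u+\eps$, $w^\eps=w$ and asserts that $\phi_\eps(u^\ll+\eps,w^\ll)$ converges pointwise to $\phi(u^\ll,w^\ll)$; but on the set $\{u^\ll=0\}$ (where finiteness of the action forces $w^\ll=0$) the integrand equals $\eps/m(\eps)$, which tends to $1/m'(0^+)>0$ whenever $m$ has finite slope at the origin --- in particular in case \eqref{eq:wmob}, where $\eps/m(\eps)\equiv 1/\bar m$. If $\{u^\ll=0\}$ has positive measure, this leaves an excess $\mathcal{L}^2(\{u^\ll=0\})/(2\tau\bar m)$ in the limit, so \eqref{eq:philim} does not follow from dominated convergence as stated. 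Your fix --- lift by $\delta_\eps\gg\eps$ with $\eps/m(\delta_\eps)\to 0$ and reconcile the prescribed initial datum $\hat u^\eps=\hat u+\eps$ via a transition layer on $[0,t_\eps]$ carrying a flux of order $\delta_\eps/t_\eps$ --- is the right idea. Be aware, though, that the layer's action estimate is not entirely routine: at the start of the layer the density is only bounded below by $\eps$, so the relevant integral behaves like $(\delta_\eps/t_\eps)\int_\eps^{\delta_\eps}\mathrm{d}z/m(z)$ rather than $\delta_\eps^2/(t_\eps\,m(\delta_\eps))$; in case \eqref{eq:wmob} this yields $(\delta_\eps/t_\eps)\log(\delta_\eps/\eps)/\bar m$, which vanishes for, e.g., $\delta_\eps=\eps^{1/2}$ and $t_\eps=\eps^{1/4}$. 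You should carry out this estimate, and also note that the layer leaves $u(1)$ (hence the energy term) untouched and does not affect the weak$\ast$ convergence $(u^\eps,w^\eps)\rightharpoonup^\ast(u,w)$.
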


We divide the proof into smaller steps. For later reference, we summarize the following obvious results on the recession function $\phi^\rec$ in

\begin{lemma}[Recession function {\cite{dns2009,lisini2010}}]\label{lem:rec}~
\begin{enumerate}[(a)]
\item Assume that $M=\infty$ and $m$ satisfies \eqref{eq:wmob}. Then, the recession function $\phi^\rec$ of $\phi$ is given by 
\begin{align*}
\phi^\rec\equiv \phi.
\end{align*}
\item If $M=\infty$ and $m$ satisfies \eqref{eq:slmob}, one has
\begin{align*}
\phi^\rec(z,v)=\begin{cases}0&\text{if }v=0,\\
+\infty&\text{otherwise.}\end{cases}
\end{align*}
\item If $M<\infty$, then
\begin{align*}
\phi^\rec(z,v)=\begin{cases}0&\text{if }(z,v)=(0,0),\\
+\infty&\text{otherwise.}\end{cases}
\end{align*}
\item For all $\eps\in(0,1)$, one has
\begin{align*}
\phi^\rec_\eps\equiv \phi^\rec.
\end{align*}
\end{enumerate}
\end{lemma}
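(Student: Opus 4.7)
The proof is a direct computation from the definition \eqref{eq:rec}; since $\phi$ is convex and lower semicontinuous with non-empty effective domain, the value of $\phi^\rec(z,v)$ is independent of the choice of the base point $(z_0,v_0)\in\dom\phi$, so in each of the four cases I would fix a convenient base point and pass to the limit. Throughout, recall that on $\R_{\ge 0}\times\R$ the non-trivial part of the limit only needs to be checked for directions $(z,v)$ with $z\ge 0$ (otherwise $z_0+sz$ leaves the physical domain $[0,M]$).

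For part (a), with $m(z)=\bar m z$ on $(0,\infty)$, the function $\phi$ itself is positively $1$-homogeneous: $\phi(sz,sv)=v^2/(\bar m z)\cdot s=s\phi(z,v)$ for $z>0$. Choosing $(z_0,v_0)=(1,0)$ gives $\phi(z_0,v_0)=0$, and a direct computation shows
\[
\lim_{s\to\infty}\frac{\phi(1+sz,sv)}{s}=\lim_{s\to\infty}\frac{sv^2}{\bar m(1+sz)}=\frac{v^2}{\bar m z}=\phi(z,v)
\]
for $z>0$, while for $z=0$ one recovers $\phi(0,0)=0$ if $v=0$ and $+\infty$ otherwise, so $\phi^\rec\equiv\phi$. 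For part (b), with the same base point $(1,0)$, the sublinearity \eqref{eq:slmob} implies $m(1+sz)/s\to 0$ as $s\to\infty$ for every $z\ge 0$, so the quotient $sv^2/m(1+sz)$ diverges unless $v=0$, yielding the stated dichotomy. For part (c), pick $(z_0,v_0)=(M/2,0)$ with $\phi(z_0,v_0)=0$. If $z>0$ then $z_0+sz$ exceeds $M$ for all sufficiently large $s$, forcing the numerator to be $+\infty$; if $z=0$, $\phi(M/2,sv)/s=sv^2/m(M/2)$ which tends to $+\infty$ whenever $v\neq 0$ and to $0$ otherwise. Hence the only direction in which $\phi^\rec$ is finite is $(z,v)=(0,0)$, which gives assertion (c).

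Part (d) follows because the regularisation only adds an $\eps$ term in the numerator: for any base point $(z_0,0)$ with $z_0\in(0,M)$ one has $\phi_\eps(z_0,0)=\eps/m(z_0)<\infty$, and for any admissible direction $(z,v)$,
\[
\frac{\phi_\eps(z_0+sz,sv)-\phi_\eps(z_0,0)}{s}=\frac{\phi(z_0+sz,sv)-\phi(z_0,0)}{s}+\frac{\eps}{s}\left(\frac{1}{m(z_0+sz)}-\frac{1}{m(z_0)}\right).
\]
In each of the cases (a)--(c) the second summand tends to zero as $s\to\infty$ (using $m$ bounded from below on compacts in $(0,M)$, respectively $m(z_0+sz)$ being eventually $+\infty$ outside the domain), so taking the limit reproduces the unregularised recession function.

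The computation is entirely elementary; the only point that requires minor care is checking that, in each of (a)--(c), the limit is actually attained rather than just an upper bound, but this is immediate from the explicit expressions above and the convexity of $\phi$ (which guarantees that the limit $s\to\infty$ coincides with the supremum over $s>0$ in the definition of the recession function). Since all these statements already appear essentially verbatim in \cite{dns2009,lisini2010}, the proof can be kept very brief, mainly pointing to these references and carrying out the short calculation for (d) since it concerns the newly introduced regularised density $\phi_\eps$.
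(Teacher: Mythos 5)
Your computation is correct, and it is essentially the argument the paper has in mind: the lemma is stated without proof as a summary of ``obvious results'' from \cite{dns2009,lisini2010}, and your direct evaluation of the limit in \eqref{eq:rec} from a convenient base point (using base-point independence and monotonicity of the difference quotient for convex l.s.c.\ functions) is exactly the standard verification, including the only genuinely new item (d), where your splitting off of the $\eps/(s\,m(z_0+sz))$ term is the right way to reduce to (a)--(c). Two cosmetic points: in (d) the phrase about ``$m(z_0+sz)$ being eventually $+\infty$'' should say that $\phi_\eps(z_0+sz,\cdot)=+\infty$ once $z_0+sz>M$, and the boundedness of the bracket when $M=\infty$ uses that $m$ is nondecreasing, which follows from concavity together with $m(0)=0$ and $m>0$ on $(0,\infty)$.
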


This result particularly allows us in the cases (b) and (c) to conclude absolute continuity of $w$ and $(u,w)$, respectively, if $\Phi(u,w)<\infty$. 

\subsection{\texorpdfstring{$\Gamma$-convergence of $\fc_{\eps,\Delta}$ as $\Delta\to 0$}{Gamma-convergence of the epsilon-Delta functional}}

We now address the first part of the proof of Theorem \ref{thm:gc}(a).

\begin{prop}[$\liminf$ estimate for $\fc_{\eps,\Delta}$]\label{prop:Fepsdelinf}
Fix $\eps\in(0,1)$ and assume that $(\uD,\wD)_{\Delta}$ converges weakly$\ast$ in $\Y$ to $(u,w)\in\Y$ as $\Delta=(\Dt,\Dx)\to 0$. Then, $\fc_\eps(u,w)\le\liminf\limits_{\Delta\to 0}\fc_{\eps,\Delta}(\uD,\wD)$.
\end{prop}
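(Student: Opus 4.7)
\smallskip

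The plan is to assume, without loss of generality, that $L := \liminf_{\Delta \to 0} \fc_{\eps,\Delta}(u^\Delta, w^\Delta) < \infty$ and pass to a subsequence realizing this liminf, along which $\fc_{\eps,\Delta}(u^\Delta, w^\Delta)$ is uniformly bounded. In particular, $u^\Delta$ and $w^\Delta$ are piecewise constant according to \eqref{eq:pwc}, the coefficients $(\uD_{i,j}, \wD_{i,j})$ satisfy the discrete continuity equation \eqref{eq:CE}, and $\int_{I\times J}\phi_\eps(\uD, \wD)\dd x \dd t \le 2\tau L$ uniformly. The proof splits naturally into three tasks: (i) verifying that the weak$\ast$ limit $(u,w)$ lies in the admissible set for $\fc_\eps$, (ii) a lower semicontinuity bound for the action part, and (iii) a lower semicontinuity bound for the energy part.

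For task (i), I would test the discrete continuity equation \eqref{eq:CE} against an arbitrary $\varphi \in C^1(I \times J)$ and perform summation by parts. Since $\varphi$ is uniformly continuous with uniformly continuous partial derivatives, the discrete difference quotients of $\varphi$ converge uniformly to $\partial_t \varphi$ and $\partial_x \varphi$. Combining this with the weak$\ast$ convergence $(\uD, \wD) \rightharpoonup^\ast (u,w)$ in $\meas(I\times J)^2$, the discrete identity \eqref{eq:CE} passes in the limit to the distributional continuity equation \eqref{eq:contdist}. The flux boundary condition $\wD_{i,1}=0$ persists in the limit. For the initial datum, the piecewise constant approximation $\hat u^{\eps,\Delta}$ defined in \eqref{eq:appic} satisfies $\hat u^{\eps,\Delta} \rightharpoonup^\ast \hat u^\eps$ by construction, yielding $u(0)=\hat u^\eps$. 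Hence $(u,w) \in \scrC$ with the correct initial condition.

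For task (ii), the action integrand $\phi_\eps$ is convex, lower semicontinuous, nonnegative and proper on $\R_{\ge 0}\times \R$; by Lemma~\ref{lem:rec}(d) its recession function agrees with $\phi^\rec$. Therefore the functional
\begin{align*}
(u,w) \mapsto \int_{I\times J} \phi_\eps(u^\ll, w^\ll)\dd x \dd t + \int_{I\times J} \phi^\rec\!\left(\frac{\dd(u^\perp, w^\perp)}{\dd|(u^\perp, w^\perp)|}\right)\dd|(u^\perp, w^\perp)|
\end{align*}
is weakly$\ast$ lower semicontinuous on $\meas(I\times J)^2$ by the standard Reshetnyak-type lower semicontinuity theorem for convex integral functionals on measures (as used in \cite{dns2009,lisini2010}). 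Applying this to the sequence $(\uD, \wD)$---which are absolutely continuous with respect to Lebesgue, so the action reduces to $\int \phi_\eps(\uD, \wD)\dd x \dd t$---gives the desired bound on the action term.

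For task (iii), I would split $\ent^\Delta(\uD(1))$ into the three pieces. First, bounds on the action combined with the discrete continuity equation give an equicontinuity-in-time estimate (after suitable interpretation) ensuring $\uD(1) \rightharpoonup^\ast u(1)$ in $\meas^+(J)$ along the subsequence. The potential part $\int_J V^\Delta \uD(1)\dd x \to \int_J V \dd u(1)$ follows from Hölder continuity of $V$ (so $V^\Delta \to V$ uniformly) and weak$\ast$ convergence. The internal energy term satisfies $\int_J E(u(1))\dd x \le \liminf_\Delta \int_J E(\uD(1))\dd x$ by convexity of $E$ together with the growth conditions \eqref{eq:Egrowth}--\eqref{eq:doubling} (for $M=\infty$), which guarantee absolute continuity of the limit and lower semicontinuity in the weak$\ast$ topology; if $M<\infty$ the $L^\infty$-bound on $\uD$ yields the same conclusion more easily. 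For the gradient-dependent piece in \eqref{eq:E4}, the uniform convexity of $G$ bounds $\bar{\uD}(1)$ in $H^1(J)$; up to a further subsequence $\bar{\uD}(1) \rightharpoonup \bar u$ weakly in $H^1(J)$, and one checks that $\bar u = u(1)$ by noting that $\bar{\uD}(1) - \uD(1) \to 0$ in $L^1(J)$ as $\Dx \to 0$ (they agree outside cell midpoints up to a shift). Weak lower semicontinuity of $\int_J G(\partial_x \cdot)\dd x$ then completes the estimate.

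The main obstacle I expect is the passage $\uD(1) \rightharpoonup^\ast u(1)$ at the specific terminal slice: weak$\ast$ convergence in $\meas(I \times J)$ does not directly give convergence of slices. I would overcome this by extracting time equicontinuity from the discrete continuity equation and the uniform action bound (mimicking the argument that makes $t \mapsto u(t)$ continuous with respect to $\W_m$ in the continuous setting), so that $\uD(1)$ is in fact weakly$\ast$ precompact and its limit coincides with $u(1)$. A secondary technical point is identifying the $H^1$-weak limit $\bar u$ with $u(1)$ in the fourth-order case, which is handled via the $L^1$-closeness of $\bar{\uD}(1)$ and $\uD(1)$ together with the already established $\uD(1) \rightharpoonup^\ast u(1)$.
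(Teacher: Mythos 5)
Your proposal is correct and follows essentially the same route as the paper: test the discrete continuity equation \eqref{eq:CE} against smooth functions and sum by parts to recover \eqref{eq:contdist} in the limit, invoke the Reshetnyak-type lower semicontinuity theorem for convex functionals of measures (together with $\phi_\eps^\rec\equiv\phi^\rec$) for the action, and treat the energy term by the same case analysis (H\"older continuity of $V$; superlinear growth or $L^\infty$-bounds for $E$; a uniform $H^1$-bound for the gradient part in case \eqref{eq:E4}). The one place where you genuinely deviate is the terminal-slice convergence $\uD(1)\rightharpoonup^\ast u(1)$: you propose to extract time equicontinuity from the action bound \`a la Benamou--Brenier, whereas the paper gets this almost for free --- the discrete weak formulation obtained from \eqref{eq:CE} carries the boundary term $\int_J\varphi(1,x)\uD(1,x)\dd x$ explicitly, so after extracting a weak$\ast$-convergent subsequence of $(\uD(1))_\Delta$ (mass is conserved, hence uniformly bounded), the limiting identity \eqref{eq:contdist} identifies that limit as the trace $u(1)$. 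If you pursue your equicontinuity route, note that the continuous argument bounding $\W_m(u(s),u(t))$ by the action does not apply verbatim, since the piecewise constant pair $(\uD,\wD)$ solves only the \emph{discrete} continuity equation; you would instead have to estimate $\bigl|\int_J f\,\dn\uD(t)-\int_J f\,\dn\uD(s)\bigr|$ for Lipschitz $f$ directly from \eqref{eq:CE} and the $L^1$-bound on $\wD$ that the action provides. Both approaches work; the paper's is shorter, yours is more robust and would also give quantitative continuity in time. Your treatment of the fourth-order case via weak $H^1$-convergence plus $L^1$-closeness of $\bar\uD(1)$ and $\uD(1)$ is a fine, slightly more elementary substitute for the paper's use of the compact embedding $H^1(J)\hookrightarrow C^{1/2}(J)$ and Arzel\`a--Ascoli.
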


\begin{proof}
Without loss of generality, since $\fc_{\eps,\Delta}$ is bounded from below, we can assume that \break $\sup_{\Delta}\fc_{\eps,\Delta}(\uD,\wD)<\infty$. Hence, $(\uD,\wD)$ are piecewise constant densities on $I\times J$ with values satisfying \eqref{eq:CE}, for each $\Delta$. We seek to verify the weak formulation \eqref{eq:contdist} of the continuity equation for the limit $(u,w)$. To this end, we fix $\varphi\in C^1(I\times J)$ and define, for each $\Delta$, the piecewise constant function $\varphi^\Delta$ such that $\varphi^\Delta(t,x)=\varphi^\Delta_{i,j}=\varphi((i-1)\Dt,(j-1)\Dx)$ for $(t,x)\in I_i\times J_j$, $i\in\{1,\ldots,\Nt\}$, $j\in\{1,\ldots,\Nx\}$. Multiplication of the respective equation in \eqref{eq:CE} with $\varphi^\Delta_{i,j}$ and summation yields
\begin{align*}
\sum_{i=1}^\Nt\sum_{j=1}^\Nx(\varphi^\Delta_{i,j}(\uD_{i,j}-\uD_{i-1,j})\Dx+\varphi^\Delta_{i,j}(\wD_{i,j+1}-\wD_{i,j})\Dt)&=0,
\end{align*}
putting $\wD_{i,\Nx+1}=0$ for each $i$ and $\varphi^\Delta_{0,j}=0$ for all $j$, and writing $\hat u^{\eps,\Delta}_j=\uD_{0,j}$ for all $j$. Rearranging the sums yields
\begin{align*}
&\Dx\Dt\sum_{i=1}^{\Nt-1}\sum_{j=1}^\Nx \frac{\varphi^\Delta_{i,j}-\varphi^\Delta_{i+1,j}}{\Dt}\uD_{i,j}+\Dx\Dt \sum_{i=1}^\Nt
\sum_{j=1}^{\Nx-1}\frac{\varphi^\Delta_{i,j}-\varphi^\Delta_{i,j+1}}{\Dx}\wD_{i,j}\\
&+\Dx \sum_{j=1}^\Nx(\varphi^\Delta_{\Nt,j}\uD_{\Nt,j}-\varphi^\Delta_{1,j}\uD_{0,j})=0.
\end{align*}
We express in terms of integrals to obtain
\begin{align}\label{eq:dweak}
\begin{split}
&\int_{I\setminus I_{\Nt}}\int_J \frac{\varphi^\Delta(t,x)-\varphi^\Delta(t+\Dt,x)}{\Dt}\uD(t,x)\dd x\dd t\\&+\int_I\int_{J\setminus J_{\Nx}}\frac{\varphi^\Delta(t,x-\Dx)-\varphi^\Delta(t,x)}{\Dx}\wD(t,x)\dd x\dd t\\&+\int_J(\varphi^\Delta(1,x)\uD(1,x)-\varphi^\Delta(0,x)\uD(0,x))\dd x=0.
\end{split}
\end{align}
Since $\Dx\sum_{j=1}^\Nx \uD_{i,j}=\Dx\sum_{j=1}^\Nx \hat u^{\eps,\Delta}_{j}=\int_J\dd \hat u^\eps\le \int_J \dd \hat u+\eps<\infty$ for all $i$ and $\Delta$, weak$\ast$-convergence of $\uD$ to $u$ yields $\int_I\int_J\dd u(t,x)=\int_J \dd \hat u+\eps$, and on a suitable subsequence, $\uD(1)\rightharpoonup^\ast u(1)$. Passing to the limit $\Delta\to 0$ in \eqref{eq:dweak} clearly yields \eqref{eq:contdist}, using that the terms involving $\varphi$ converge uniformly. Notice that by construction, $\hat u^{\eps,\Delta}\rightharpoonup^\ast \hat u^\eps$ as $\Delta\to 0$.

Thus, we have shown that $\fc_\eps(u,w)<\infty$ if and only if $\int_I\Phi_\eps(u(t),w(t))\dd t$ and $\ent(u(1))$ are finite. Now, \cite[Thm. 2.34]{ambrosio2000} on weak$\ast$-lower semicontinuity of certain integral functionals yields
\begin{align*}
&\int_I\int_J \phi_\eps(u^\ll,w^\ll)\dd x\dd t+\int_I\int_J \phi^\rec\left(\frac{\dd (u^\perp,w^\perp)}{\dd |(u^\perp,w^\perp)|}\right)\dd |(u^\perp,w^\perp)|\le \liminf_{\Delta\to 0}\int_I\int_J\phi_\eps(\uD,\wD)\dd x\dd t,
\end{align*}
thanks to the convexity of $\phi_\eps$ and $\phi_\eps^\rec\equiv\phi^\rec$. 

Using the H\"older continuity of $V$, one easily sees that
\begin{align*}
\lim_{\Delta\to 0}\int_J V^\Delta\uD(1,x)\dd x=\int_J V\dd u(1).
\end{align*}

In order to prove the $\liminf$ estimate for the internal energy, we distinguish the cases \eqref{eq:E2} and \eqref{eq:E4}.

Assume that $\ent$ is of the form \eqref{eq:E2}. If $M=+\infty$, convexity and superlinear growth \eqref{eq:Egrowth} imply with the help of \cite[Ex. 2.36]{ambrosio2000} that $E(u(1))\in L^1(J)$ and
\begin{align}\label{eq:Elsc}
\int_J E(u(1))\dd x&\le \liminf_{\Delta\to 0}\int_J E(\uD(1))\dd x.
\end{align}
If $M<\infty$, the family $(\uD(1))_\Delta$ is bounded in all $L^p(J)$. Again, convexity implies \eqref{eq:Elsc} via Alaoglu's theorem (extracting a subsequence if necessary).

Consider now the case of gradient-dependent energy density \eqref{eq:E4}. Since $\sup_{\Delta}\ent(\uD(1))<\infty$, $\|\bar\uD(1)\|_{H^1(J)}$ is uniformly bounded w.r.t. $\Delta$. Hence, on a suitable subsequence, since $H^1(J)$ is compactly contained in $C^{1/2}(J)$, $\bar \uD(1)\to u(1)\in C^{1/2}(J)$ uniformly by the Arzel\`{a}-Ascoli theorem. Uniform H\"older continuity of $(\bar\uD(1))_\Delta$ also implies that $\uD(1)\to u(1)$ uniformly. Alaoglu's theorem implies that $\partial_x \bar \uD(1)\rightharpoonup \partial_x u(1)$ in $L^2(J)$. Using the uniform convexity of $G$, the continuity of $E$ and the dominated convergence theorem, one has
\begin{align*}
\int_J G(\partial_x \bar \uD(1))\dd x&\le \liminf_{\Delta\to 0}\int_J G(\partial_x u(1))\dd x,\\
\lim_{\Delta \to 0}\int_J E(\uD(1))\dd x&=\int_J E(u(1))\dd x.
\end{align*}

All in all, the desired $\liminf$ estimate for $\fc_\eps$ follows.
\end{proof}

In order to complete the proof of Theorem \ref{thm:gc}(a), we show
\begin{prop}[Recovery sequence for $\fc_{\eps,\Delta}$]
Fix $\eps\in(0,1)$ and let $(u,w)\in\Y$ with $\fc_\eps(u,w)<\infty$ be given. Define, for each $\Delta$, piecewise constant functions $(\uD,\wD)$ according to \eqref{eq:pwc}, with values
\begin{align}\label{eq:rs}
\begin{split}
\uD_{i,j}&=\frac1{\Dt\Dx}\int_{I_i}\int_{J_j}\dd u,\quad\forall i\in\{2,\ldots,\Nt\},~j\in\{1,\ldots,\Nx\},\\
\uD_{1,j}&=\frac1{\Dt\Dx}\int_{(0,\Dt]}\int_{J_j}\dd u,\quad\forall j\in\{1,\ldots,\Nx\},\\
\wD_{i,j}&=\frac1{\Dt\Dx}\int_{I_i}\int_{J_j}\dd w,\quad\forall i\in\{1,\ldots,\Nt\},~j\in\{2,\ldots,\Nx\},\\
\wD_{i,1}&=0,\quad\forall i\in\{1,\ldots,\Nt\}.
\end{split}
\end{align}
Then, $(\uD,\wD)\rightharpoonup^\ast (u,w)$ as $\Delta\to 0$, and
\begin{align}\label{eq:deltasup}
\limsup_{\Delta\to 0}\fc_{\eps,\Delta}(\uD,\wD)\le \fc_\eps(u,w).
\end{align}
\end{prop}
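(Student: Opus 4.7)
The argument splits into three stages: verifying admissibility and weak$\ast$ convergence of $(\uD,\wD)$; the action limsup via a convexity (Jensen) estimate; and matching the final-time energy.

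For weak$\ast$ convergence, for any $\psi\in C(I\times J)$, uniform continuity gives $\int\psi\,\uD\dd x\dd t\to\int\psi\dd u$, and analogously for $w$; this is immediate from the definition of $\uD_{i,j}$ and $\wD_{i,j}$ as cell averages, since $\int\psi^\Delta\uD\dd x\dd t=\int\psi^\Delta\dd u$ exactly and $\|\psi-\psi^\Delta\|_\infty\to 0$. To verify the discrete continuity equation \eqref{eq:CE}, I would plug smooth approximations of indicators of cells $I_i\times J_j$ (and the analogous corner test functions anchored at $t=0$) into the distributional form \eqref{eq:contdist} and pass to the limit; unpacking the resulting integral identities via the definitions of $\uD_{i,j}$, $\wD_{i,j}$ and $\hat u^{\eps,\Delta}_j$ produces \eqref{eq:CE}, with $\wD_{i,1}=0$ inherited from $w(t,0)=0$ built into $\scrC$. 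Finiteness of $\fc_{\eps,\Delta}(\uD,\wD)$ follows because $\fc_\eps(u,w)<\infty$ forces $u^\ll\in(0,M)$ almost everywhere, hence each cell average $\uD_{i,j}$ lies in $(0,M)$.

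The action estimate is a measure-theoretic Jensen inequality for the convex lower semicontinuous functional
\begin{align*}
\mathcal{P}_B(\mu,\nu):=\int_B\phi_\eps(\mu^\ll,\nu^\ll)\dd x\dd t+\int_B\phi^\rec\dd|(\mu^\perp,\nu^\perp)|,
\end{align*}
applied cell by cell. Since the cell average $(\uD_{i,j},\wD_{i,j})$ defines an absolutely continuous measure on $B=I_i\times J_j$, Jensen yields $\Dt\Dx\,\phi_\eps(\uD_{i,j},\wD_{i,j})\le\mathcal{P}_{I_i\times J_j}(u,w)$, and summing over $(i,j)$ gives
\begin{align*}
\int_I\Phi_\eps(\uD(t),\wD(t))\dd t\le\int_I\Phi_\eps(u(t),w(t))\dd t.
\end{align*}

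The final-time energy $\ent^\Delta(\uD(1))$ is the delicate point. The potential contribution $\int V^\Delta\uD(1)\dd x\to\int V\dd u(1)$ follows from the weak$\ast$-continuity of $t\mapsto u(t)$ afforded by the continuity equation (hence $\uD(1)\rightharpoonup^\ast u(1)$) together with uniform convergence $V^\Delta\to V$ via H\"older continuity of $V$. The internal- and (in the fourth-order case) gradient-energy terms are problematic because $\uD_{\Nt,j}$ involves a \emph{time} average over $I_\Nt$, whereas $u(1)$ is only the trace at $t=1$; without further regularity of $t\mapsto u(t)$, a direct Jensen bound does not close the argument. The main obstacle is precisely this mismatch, and I would resolve it by a density/reparametrization step: modify $(u,w)$ to $(\tilde u,\tilde w)$ which is frozen on $[1-\delta,1]$ (setting $\tilde u(t)=u(1)$ and $\tilde w(t)=0$ there) and rescaled on $[0,1-\delta]$. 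The modification preserves $u(1)$ and increases the action by $O(\delta)$, so it suffices to establish the limsup for such modified pairs. For them, once $\Dt<\delta$, the temporal averages on $I_\Nt$ reproduce $u(1)$ exactly; spatial Jensen then gives $\sum_j\Dx\,E(\uD_{\Nt,j})\le\int_J E(u(1))\dd x$, and standard piecewise-affine interpolation estimates combined with uniform convexity of $G$ yield $\int_J G(\partial_x\bar\uD(1))\dd x\to\int_J G(\partial_x u(1))\dd x$. A diagonal choice $\delta=\delta(\Delta)\to 0$ then completes the recovery.
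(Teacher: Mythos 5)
Your treatment of admissibility, weak$\ast$ convergence and the action term follows the same skeleton as the paper's proof: the cell averages satisfy \eqref{eq:CE} by testing the distributional continuity equation against mollified indicators, weak$\ast$ convergence follows by transferring the averaging onto the test function, and the action is controlled by Jensen's inequality. Your packaging of the action estimate as a single measure-theoretic Jensen inequality for $\mathcal{P}_B$ on each cell (via $\phi_\eps(a+s,b+t)\le\phi_\eps(a,b)+\phi^\rec(s,t)$ and the $1$-homogeneity and convexity of $\phi^\rec$) is correct and in fact cleaner than the paper's argument, which splits into the three cases \eqref{eq:wmob}, \eqref{eq:slmob} and $M<\infty$ and handles the singular parts by monotonicity of $m$ or by $1$-homogeneity of $\phi$ separately in each case.

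The genuine divergence is at the terminal energy, and your diagnosis of the difficulty is accurate: $\uD_{\Nt,j}$ is a space-time average over $I_{\Nt}\times J_j$, so Jensen only bounds $\int_J E(\uD(1))\dd x$ by $\frac1{\Dt}\int_{I_{\Nt}}\int_J E(u(t,x))\dd x\dd t$, and for a general competitor this time average need not converge to $\int_J E(u(1))\dd x$ (lower semicontinuity of $\ent$ along $t\to 1$ gives the inequality in the wrong direction). The paper applies Jensen directly as if $\uD(1)$ were a spatial average of $u(1)$ and does not address this point. Your freeze-and-rescale modification is the standard cure, but two caveats apply. First, it constructs a \emph{different} recovery sequence (cell averages of $(\tilde u,\tilde w)$ rather than of $(u,w)$), so it establishes the existence of a recovery sequence---which is all that Theorem \ref{thm:gc}(a) and the convergence of minimizers require---but not the limsup inequality for the specific sequence \eqref{eq:rs} that the proposition literally asserts. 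Second, the frozen segment contributes $\delta\int_J\eps/m(u(1)^\ll)\dd x$ to the regularized action, and $\fc_\eps(u,w)<\infty$ only guarantees $\int_J m(u^\ll(t))^{-1}\dd x<\infty$ for almost every $t$, not at $t=1$ (take $m(z)=z$ and a terminal density with nonintegrable reciprocal); an additional approximation of the terminal datum is therefore needed before freezing. With those repairs your route is sound and, unlike the paper's, actually addresses the mismatch between the time average on $I_{\Nt}$ and the trace at $t=1$.
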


\begin{proof}
We first sketch that the definition of $(\uD,\wD)$ via \eqref{eq:rs} and \eqref{eq:pwc} leads to a density satisfying \eqref{eq:CE}. For instance, in order to verify the first set of conditions in \eqref{eq:CE}, we take a sequence $(\varphi^\delta)_{\delta>0}$ in $C^1(I\times J)$ such that 
\begin{align*}
\partial_t\varphi^\delta\to \frac1{\Dt}(\eins{I_i\times J_j}-\eins{I_{i-1}\times J_j}),\\
\partial_x\varphi^\delta\to \frac1{\Dx}(\eins{I_i\times J_{j+1}}-\eins{I_i\times J_j}),
\end{align*}
pointwise in $I\times J$ as $\delta\to 0$. Since $(u,w)\in\scrC$ as $\fc_\eps(u,w)<\infty$, we get---using the dominated convergence theorem---that
\begin{align*}
0&=\int_I\int_J \frac1{\Dt}(\eins{I_i\times J_j}-\eins{I_{i-1}\times J_j})\dd u+\int_I\int_J\frac1{\Dx}(\eins{I_i\times J_{j+1}}-\eins{I_i\times J_j})\dd w,
\end{align*}
which is by construction of $(\uD,\wD)$ nothing else as
\begin{align*}
(\uD_{i,j}-\uD_{i-1,j})\Dx+(\wD_{i,j+1}-\wD_{i,j})\Dt&=0.
\end{align*}
The remaining conditions in \eqref{eq:CE} can be considered similarly.

We now prove that $\uD\rightharpoonup^\ast u$. The proof of $\wD\rightharpoonup^\ast w$ can be done similarly. Fix $f\in C(I\times J)$. By definition of $\uD$, one has
\begin{align*}
\int_I\int_J f \uD\dd x\dd t-\int_I\int_J f\dd u&=\sum_{i=1}^\Nt\sum_{j=1}^\Nx\int_{I_i}\int_{J_j}\left[\frac1{\Dt\Dx}\int_{I_i}\int_{J_j} f(s,y)\dd s\dd y-f(t,x)\right]\dd u(t,x)\\
&=\int_I\int_J (f^\Delta-f)\dd u,
\end{align*}
where $f^\Delta-f\to 0$ pointwise everywhere on $I\times J$ since $f$ is continuous (thus, every point $(t,x)\in I\times J$ is a Lebesgue point). Again, the dominated convergence theorem yields the asserted
\begin{align*}
\lim_{\Delta\to 0}\left(\int_I\int_J f \uD\dd x\dd t-\int_I\int_J f\dd u\right)&=0.
\end{align*}
We now treat all integrals appearing in $\fc_\eps$ separately and distinguish cases for the action part. Assume at first that $m$ satisfies $M=\infty$ and \eqref{eq:slmob}. Then, thanks to $\fc_\eps(u,w)<\infty$ and Lemma \ref{lem:rec}(b), we have $w=w^\ll$. By construction of $(\uD,\wD)$, one gets
\begin{align*}
&\int_I\Phi_\eps(\uD(t),\wD(t))\dd t=\int_I\int_J\phi_\eps(\uD,\wD)\dd x\dd t=\Dt\Dx\sum_{i=1}^\Nt\sum_{j=1}^\Nx \phi_\eps(\uD_{i,j},\wD_{i,j}).
\end{align*}
Taking into account that $\wD_{i,1}=0$, we have
\begin{align*}
\Dt\Dx\sum_{i=1}^\Nt\sum_{j=1}^\Nx \phi_\eps(\uD_{i,j},\wD_{i,j})&\le\Dt\Dx\sum_{i=2}^\Nt\sum_{j=1}^\Nx \phi_\eps\left(\frac1{\Dt\Dx}\int_{I_i}\int_{J_j}\dd u,\frac1{\Dt\Dx}\int_{I_i}\int_{J_j}\dd w\right)\\
&\quad+\Dt\Dx\sum_{j=1}^\Nx \phi_\eps\left(\frac1{\Dt\Dx}\int_{(0,\Dt]}\int_{J_j}\dd u,\frac1{\Dt\Dx}\int_{I_1}\int_{J_j}\dd w^\ll\right).
\end{align*}
Since $m$ is nondecreasing, one has for every Borel set $A\subset I\times J$:
\begin{align*}
m\left(\int_A \dd u\right)\ge m\left(\int_A \dd u^\ll\right).
\end{align*}
So, using the absolute continuity of $(u^\ll,w^\ll)$:
\begin{align*}
&\Dt\Dx\sum_{i=1}^\Nt\sum_{j=1}^\Nx \phi_\eps(\uD_{i,j},\wD_{i,j})\le \Dt\Dx \sum_{i=1}^\Nt\sum_{j=1}^\Nx \frac{\left(\frac1{\Dt\Dx}\int_{I_i}\int_{J_j}\dd w^\ll\right)^2+\eps}{m\left(\frac1{\Dt\Dx}\int_{I_i}\int_{J_j}\dd u^\ll\right)}
\end{align*}
We apply Jensen's inequality ($\phi_\eps$ is convex):
\begin{align*}
\Dt\Dx \sum_{i=1}^\Nt\sum_{j=1}^\Nx \frac{\left(\frac1{\Dt\Dx}\int_{I_i}\int_{J_j}\dd w^\ll\right)^2+\eps}{m\left(\frac1{\Dt\Dx}\int_{I_i}\int_{J_j}\dd u^\ll\right)}&\le \int_I\int_J\phi_\eps(u^\ll,w^\ll)\dd x\dd t=\int_I\Phi_\eps(u(t),w(t))\dd t.
\end{align*}
All in all, we have:
\begin{align}\label{eq:epsl}
&\limsup_{\Delta\to 0}\int_I\Phi_\eps(\uD(t),\wD(t))\dd t\le \int_I\Phi_\eps(u(t),w(t))\dd t,
\end{align}
for all $\Delta$. The same calculations---apart from the monotonicity argument which is not needed since $u^\perp=0$---show that \eqref{eq:epsl} also holds for $M<\infty$. We now consider the remaining case $M=\infty$ and \eqref{eq:wmob}. First,
\begin{align*}
&\int_I\Phi_\eps(\uD(t),\wD(t))\dd t=\int_I\int_J \phi(\uD,\wD)\dd x\dd t+\int_I\int_J\frac{\eps}{m(\uD)}\dd x\dd t.
\end{align*}
The last integral above can be estimated with Jensen's inequality as before:
\begin{align*}
\int_I\int_J\frac{\eps}{m(\uD)}\dd x\dd t\le \int_I\int_J\frac{\eps}{m((u^\ll)^\Delta)}\dd x\dd t\le \int_I\int_J\frac{\eps}{m(u^\ll)}\dd x\dd t.
\end{align*}
Second, one has by construction of $(\uD,\wD)$ and since $\phi$ is 1-homogeneous if $m$ satisfies \eqref{eq:wmob}:
\begin{align*}
&\int_I\int_J \phi(\uD,\wD)\dd x\dd t=\Dx\Dt\sum_{i=1}^\Nt\sum_{j=1}^\Nx \phi(\uD_{i,j},\wD_{i,j})\\
&\le \Dx\Dt \sum_{i=2}^\Nt\sum_{j=1}^\Nx \phi\left(\frac1{\Dx\Dt}\int_{I_i}\int_{J_j}\dd u,\frac1{\Dx\Dt}\int_{I_i}\int_{J_j}\dd w\right)\\
&\quad +\Dx\Dt \sum_{j=1}^\Nx \phi\left(\frac1{\Dx\Dt}\int_{(0,\Dt]}\int_{J_j}\dd u,\frac1{\Dx\Dt}\int_{(0,\Dt]}\int_{J_j}\dd w\right)\\
&=\sum_{i=2}^\Nt\sum_{j=1}^\Nx \phi\left(\int_{I_i}\int_{J_j}\frac{\dd (u,w)}{\dd |(u,w)|}\dd |(u,w)|\right)+\sum_{j=1}^\Nx \phi\left(\int_{(0,\Dt]}\int_{J_j}\frac{\dd (u,w)}{\dd |(u,w)|}\dd |(u,w)|\right)\\
&=\sum_{i=2}^\Nt\sum_{j=1}^\Nx \phi\left(\int_{I_i}\int_{J_j}\frac{\dd (u,w)}{\dd |(u,w)|}\left(\int_{I_i}\int_{J_j}\dd |(u,w)|\right)\dd \frac{|(u,w)|}{\int_{I_i}\int_{J_j}\dd |(u,w)|}\right)\\
&\quad +\sum_{j=1}^\Nx \phi\left(\int_{(0,\Dt]}\int_{J_j}\frac{\dd (u,w)}{\dd |(u,w)|}\left(\int_{(0,\Dt]}\int_{J_j}\dd |(u,w)|\right)\dd \frac{|(u,w)|}{\int_{(0,\Dt]}\int_{J_j}\dd |(u,w)|}\right)\\
\end{align*}
Now, Jensen's inequality yields
\begin{align*}
&\sum_{i=2}^\Nt\sum_{j=1}^\Nx \phi\left(\int_{I_i}\int_{J_j}\frac{\dd (u,w)}{\dd |(u,w)|}\left(\int_{I_i}\int_{J_j}\dd |(u,w)|\right)\dd \frac{|(u,w)|}{\int_{I_i}\int_{J_j}\dd |(u,w)|}\right)\\
&\quad +\sum_{j=1}^\Nx \phi\left(\int_{(0,\Dt]}\int_{J_j}\frac{\dd (u,w)}{\dd |(u,w)|}\left(\int_{(0,\Dt]}\int_{J_j}\dd |(u,w)|\right)\dd \frac{|(u,w)|}{\int_{(0,\Dt]}\int_{J_j}\dd |(u,w)|}\right)\\
&\le \sum_{i=2}^\Nt\sum_{j=1}^\Nx \int_{I_i}\int_{J_j} \phi\left(\frac{\dd (u,w)}{\dd |(u,w)|}\left(\int_{I_i}\int_{(J_j}\dd |(u,w)|\right)\right)\dd \frac{|(u,w)|}{\int_{I_i}\int_{J_j}\dd |(u,w)|}\\
&\quad + \sum_{j=1}^\Nx \int_{(0,\Dt]}\int_{J_j} \phi\left(\frac{\dd (u,w)}{\dd |(u,w)|}\left(\int_{(0,\Dt]}\int_{(J_j}\dd |(u,w)|\right)\right)\dd \frac{|(u,w)|}{\int_{(0,\Dt]}\int_{J_j}\dd |(u,w)|}\\
&\le \sum_{i=2}^\Nt\sum_{j=1}^\Nx \int_{I_i}\int_{J_j} \phi\left(\frac{\dd (u,w)}{\dd |(u,w)|}\right)\dd |(u,w)|+\sum_{j=1}^\Nx \int_{(0,\Dt]}\int_{J_j} \phi\left(\frac{\dd (u,w)}{\dd |(u,w)|}\right)\dd |(u,w)|\\
&\le\int_I\int_J \phi\left(\frac{\dd (u,w)}{\dd |(u,w)|}\right)\dd |(u,w)|.
\end{align*}

By \cite[Prop. 2.37]{ambrosio2000}, we have
\begin{align*}
&\int_I\int_J\phi\left(\frac{\dd (u,w)}{\dd |(u,w)|}\right)\dd|(u,w)|\\
&=\int_I\int_J\phi\left(\frac{\dd (u^\ll,w^\ll)}{\dd |(u^\ll,w^\ll)|}\right)\dd|(u^\ll,w^\ll)|+\int_I\int_J\phi\left(\frac{\dd (u^\perp,w^\perp)}{\dd |(u^\perp,w^\perp)|}\right)\dd|(u^\perp,w^\perp)|\\
&=\int_I\int_J\phi\left(u^\ll,w^\ll\right)\dd x \dd t+\int_I\int_J\phi\left(\frac{\dd (u^\perp,w^\perp)}{\dd |(u^\perp,w^\perp)|}\right)\dd|(u^\perp,w^\perp)|,
\end{align*}
and arrive at
\begin{align*}
\int_I\int_J \phi(\uD,\wD)\dd x\dd t&\le \int_I\int_J\phi\left(u^\ll,w^\ll\right)\dd x \dd t+\int_I\int_J\phi\left(\frac{\dd (u^\perp,w^\perp)}{\dd |(u^\perp,w^\perp)|}\right)\dd|(u^\perp,w^\perp)|.
\end{align*}
Thus, we again have \eqref{eq:epsl}.

For the energetic part in $\fc_{\eps,\Delta}$, we distinguish between $\ent$ satisfying \eqref{eq:E2} and \eqref{eq:E4}, respectively. For $\ent$ of the form \eqref{eq:E2}, we can use Jensen's inequality once more to obtain
\begin{align*}
\int_J E(\uD(1,x))\dd x&\le \int_J E(u(1,x))\dd x.
\end{align*}
Clearly, we have
\begin{align*}
\lim_{\Delta\to 0}\int_J V^\Delta \uD(1)\dd x=\int_J V\dd u(1).
\end{align*}
Hence, we have
\begin{align}\label{eq:entlimsup}
\limsup_{\Delta\to 0}\ent^\Delta(\uD(1))\le \ent(u(1)),
\end{align}
which proves \eqref{eq:deltasup} for \eqref{eq:E2}.

Consider now gradient-dependent energies \eqref{eq:E4}. For all $\Delta$, we get using the definition of $(\uD,\wD)$:
\begin{align*}
\int_J G(\partial_x\bar\uD(1,x))\dd x&=\Dx\sum_{j=1}^{\Nx-1}G\left(\frac1{(\Dx)^2}\left(\int_{J_{j+1}}u(1,x)\dd x-\int_{J_j}u(1,x)\dd x\right)\right)\\
&=\Dx\sum_{j=1}^{\Nx-1}G\left(\frac1{\Dx}\int_{J_j}\frac{u(1,x+\Dx)-u(1,x)}{\Dx}\dd x\right)\\
&=\Dx\sum_{j=1}^{\Nx-1}G\left(\frac1{\Dx}\int_{J_j}\frac1{\Dx}\int_x^{x+\Dx}\partial_y u(1,y)\dd y\dd x\right).
\end{align*}
Recalling that $G$ is convex, we use Jensen's inequality twice:
\begin{align*}
&\Dx\sum_{j=1}^{\Nx-1}G\left(\frac1{\Dx}\int_{J_j}\frac1{\Dx}\int_x^{x+\Dx}\partial_y u(1,y)\dd y\dd x\right)\le \int_{J\setminus {J_{\Nx}}}\frac1{\Dx}\int_x^{x+\Dx}G(\partial_y u(1,y))\dd y\dd x,
\end{align*}
which is finite by continuity of the integrand w.r.t. $x$. With Fubini's theorem and elementary calculations, one sees that
\begin{align*}
\int_{J\setminus {J_{\Nx}}}\frac1{\Dx}\int_x^{x+\Dx}G(\partial_y u(1,y))\dd y\dd x=\int_J G(\partial_y u(1,y)) g^\Delta(y)\dd y,
\end{align*}
where
\begin{align*}
g^\Delta(y):=\begin{cases}\frac{y}{\Dx}&\text{if }y\in [0,\Dx),\\ 1&\text{if }y\in[\Dx,1-\Dx],\\ \frac{1-y}{\Dx}&\text{if }y\in(1-\Dx,1].\end{cases}
\end{align*}
Obviously, $g^\Delta\le 1$ and $g^\Delta\to 1$ pointwise on $(0,1)$. Thus, by dominated convergence,
\begin{align*}
\int_J G(\partial_y u(1,y)) g^\Delta(y)\dd y\to \int_J G(\partial_y u(1,y))\dd y
\end{align*}
as $\Delta\to 0$. In summary, we have
\begin{align*}
\limsup_{\Delta\to 0}\int_J G(\partial_x\bar\uD(1,x))\dd x\le  \int_J G(\partial_x u(1,x))\dd x<\infty.
\end{align*}
Especially, the family $(\bar \uD)_\Delta$ is bounded in $H^1(J)$, so, by extracting a uniformly convergent subsequence, one has
\begin{align*}
\lim_{\Delta\to 0}\left(\int_J E(\uD(1))\dd x+\int_J V^\Delta \uD(1)\dd x\right)&=\int_J E(u(1))\dd x+\int_J V\dd u(1),
\end{align*}
so \eqref{eq:entlimsup} and thus \eqref{eq:deltasup} also hold for \eqref{eq:E4}.
\end{proof}

\subsection{\texorpdfstring{Minimizers of $\fc_{\eps,\Delta}$}{Minimizers of the epsilon-Delta functional}}
This section is concerned with the proof of Theorem \ref{thm:gc}(b). The main argument is the following estimate on the action functional: with Jensen's inequality,
\begin{align*}
\int_I\int_J \phi_\eps(u,w)\dd x\dd t&\ge \phi_\eps\left(\int_I\int_J u\dd x\dd t,\int_I\int_J |w|\dd x\dd t \right),
\end{align*}
when $\fc_{\eps,\Delta}(u,w)<\infty$. Since by \eqref{eq:CE}, one has $\int_I\int_J u\dd x\dd t=\int_J \dd \hat u^\eps$, so there exists $C>0$ independent of $\eps\in (0,1)$ such that $m(\int_I\int_J u\dd x\dd t)\le C$, which in turn yields
\begin{align}\label{eq:L1w2}
\left(\int_I\int_J|w|\dd x\dd t\right)^2\le C\int_I\int_J \phi_\eps(u,w)\dd x\dd t.
\end{align}
With this estimate at hand, the existence of minimizers for $\fc_{\eps,\Delta}$ now follows by compactness (recall that if $\fc_{\eps,\Delta}(u,w)<\infty$, $(u,w)$ can be identified with the respective family of values $(\uD_{i,j},\wD_{i,j})$ in $\R^N\times\R^N$---hence, the problem is finite-dimensional).

Denote by $(u^{\eps,\Delta}_{\min},w_{\min}^{\eps,\Delta})$ a minimizer of $\fc_{\eps,\Delta}$. Then, $\int_I\int_J u_{\min}^{\eps,\Delta}\dd x\dd t\le\int_J \dd \hat u+1$ uniformly in $\eps$ and $\Delta$. By boundedness of below of $\fc_{\eps,\Delta}$ and \eqref{eq:L1w2}, there exists $D>0$ such that
\begin{align*}
\left(\int_I\int_J |w_{\min}^{\eps,\Delta}|\dd x\dd t\right)^2&\le D(1+\fc_{\eps,\Delta}(u^{\eps,\Delta}_{\min},w_{\min}^{\eps,\Delta}))\le D(1+\fc_{\eps,\Delta}(\hat u^{\eps,\Delta},0)),
\end{align*}
where $(\hat u^{\eps,\Delta},0)$ is to be understood as constant w.r.t. $t\in I$. Now, 
\begin{align*}
\fc_{\eps,\Delta}(\hat u^{\eps,\Delta},0)=\ent(\hat u^{\eps,\Delta}),
\end{align*}
which is bounded uniformly for small $\Delta$ and $\eps$, recall that $\ent(\hat u)<\infty$ by assumption. All in all, there exists $C>0$ independent of $\eps$ and $\Delta$ such that
$(u_{\min}^{\eps,\Delta},w_{\min}^{\eps,\Delta})\in K=\{(u,w)\in\Y|\int_I\int_J \dd u\le C,\,\int_I\int_J\dd |w|\le C\}$, which is weakly$\ast$-compact, as asserted. The remaining claim on the convergence of minimizers of $\fc_{\eps,\Delta}$ to minimizers of $\fc_{\eps}$ now immediately follows from $\Gamma$-convergence (see Theorem \ref{thm:gc}(a)) and \cite[Thm. 1.21]{braides2002}.\hfill\qed

\subsection{\texorpdfstring{$\Gamma$-convergence of $\fc_{\eps}$ as $\eps\to 0$}{Gamma-convergence of the epsilon functional}}
In this section, we prove Theorem \ref{thm:gc}(c)\&(d).
\begin{prop}[$\liminf$ estimate for $\fc_\eps$]
If $(u^\eps,w^\eps)_{\eps\in(0,1)}$ is a sequence in $\Y$ weakly$\ast$-converging to $(u,w)$ as $\eps\searrow 0$, then $\fc(u,w)\le\liminf\limits_{\eps\to 0}\fc_{\eps}(u^\eps,w^\eps)$.
\end{prop}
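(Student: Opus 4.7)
The argument parallels Proposition~\ref{prop:Fepsdelinf} for the $\Delta\to 0$ case, with the simplification that no discretization is involved. I first discard the trivial case $\liminf_{\eps\to 0}\fc_\eps(u^\eps,w^\eps)=\infty$, and after extracting a subsequence realizing the liminf (not relabeled) assume a uniform bound $\fc_\eps(u^\eps,w^\eps)\le C$. Then $(u^\eps,w^\eps)\in\scrC$ with $u^\eps(0)=\hat u^\eps$, and the construction of $\hat u^\eps$ ensures $\hat u^\eps\rightharpoonup^\ast\hat u$ in $\meas^+(J)$ as $\eps\searrow 0$. Passing to the limit in the distributional continuity equation \eqref{eq:contdist} tested against an arbitrary $\varphi\in C^1(I\times J)$ is straightforward: the volume integrals converge by the weak$\ast$ convergence of $(u^\eps,w^\eps)$ in $\Y$, the boundary term at $t=0$ converges since $\hat u^\eps\rightharpoonup^\ast\hat u$, and the trace at $t=1$ is handled as in Proposition~\ref{prop:Fepsdelinf} by extracting along a further subsequence $u^\eps(1)\rightharpoonup^\ast u(1)$ (using the uniform mass bound $\int_J\dd u^\eps(t)=\int_J\dd\hat u^\eps\le\int_J\dd\hat u+1$ available for all $t$). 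This shows $(u,w)\in\scrC$ with $u(0)=\hat u$.

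The crucial step is lower semicontinuity of the action. The key pointwise comparison
\begin{align*}
\phi_\eps(z,v)\ge\phi(z,v)\qquad\text{for all }(z,v)\in\R_{\ge 0}\times\R
\end{align*}
holds because on $(0,M)$ one has $\phi_\eps(z,v)=\phi(z,v)+\eps/m(z)\ge\phi(z,v)$, while on $\partial(0,M)$ the value $\phi_\eps=+\infty$ dominates $\phi$. Together with Lemma~\ref{lem:rec}(d), which gives $\phi^\rec_\eps=\phi^\rec$, this yields $\Phi_\eps(u^\eps(t),w^\eps(t))\ge\Phi(u^\eps(t),w^\eps(t))$ for every $t$. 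Since $\phi$ is convex and lower semicontinuous, \cite[Thm.~2.34]{ambrosio2000} applied with the integrand $\phi$ and its recession function $\phi^\rec$ delivers weak$\ast$ lower semicontinuity of the map $(u,w)\mapsto\int_I\Phi(u(t),w(t))\dd t$ along $(u^\eps,w^\eps)\rightharpoonup^\ast(u,w)$, and consequently
\begin{align*}
\liminf_{\eps\to 0}\int_I\Phi_\eps(u^\eps(t),w^\eps(t))\dd t\ge\liminf_{\eps\to 0}\int_I\Phi(u^\eps(t),w^\eps(t))\dd t\ge\int_I\Phi(u(t),w(t))\dd t.
\end{align*}

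Finally, $\ent(u(1))\le\liminf_{\eps\to 0}\ent(u^\eps(1))$ follows verbatim from the corresponding portion of Proposition~\ref{prop:Fepsdelinf}: H\"older continuity of $V$ yields convergence of the potential term; convexity and superlinear growth \eqref{eq:Egrowth} give lsc of the internal energy in case \eqref{eq:E2} (with Alaoglu's theorem when $M<\infty$); and uniform convexity of $G$ together with the compact embedding $H^1(J)\hookrightarrow C^{1/2}(J)$ handle case \eqref{eq:E4}. Adding this to the action estimate produces $\fc(u,w)\le\liminf_{\eps\to 0}\fc_\eps(u^\eps,w^\eps)$. The main delicate point is the extraction of the weak$\ast$ trace $u^\eps(1)\rightharpoonup^\ast u(1)$ along a further subsequence, but this is a standard consequence of the uniform mass bound and the continuity equation already exploited in Proposition~\ref{prop:Fepsdelinf}; apart from that, the pointwise inequality $\phi_\eps\ge\phi$ reduces the remainder of the argument to the previously established lsc toolkit.
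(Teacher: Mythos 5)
Your proposal is correct and follows essentially the same route as the paper's own (much terser) proof: pass to the limit in the continuity equation using $\hat u^\eps\rightharpoonup^\ast\hat u$ and a subsequence with $u^\eps(1)\rightharpoonup^\ast u(1)$ to get $(u,w)\in\scrC$, then combine the pointwise bound $\phi\le\phi_\eps$ (with $\phi^\rec_\eps=\phi^\rec$) with the weak$\ast$ lower semicontinuity of the action and energy already established in the proof of Proposition~\ref{prop:Fepsdelinf}. The extra detail you supply on the trace extraction and on the case distinctions for $\ent$ is consistent with what the paper implicitly invokes.
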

\begin{proof}
Without loss of generality, $\sup\limits_\eps\fc_\eps(u^\eps,w^\eps)<\infty$. Easily, one sees that $\hat u^\eps\rightharpoonup^\ast \hat u$ as $\eps\to 0$. Extracting a subsequence where $u_\eps(1)\rightharpoonup^\ast u(1)$, we obtain that $(u,w)\in\scrC$. Now, the proof is completed by using weak$\ast$-lower semicontinuity:
\begin{align*}
\fc(u,w)&\le \liminf_{\eps\to 0}\int_I \Phi(u^\eps(t),w^\eps(t))\dd t+\liminf_{\eps\to 0}\ent(u^\eps(1))\\&\le \liminf_{\eps\to 0}\int_I \Phi_\eps(u^\eps(t),w^\eps(t))\dd t+\liminf_{\eps\to 0}\ent(u^\eps(1))\le \liminf_{\eps\to 0}\fc_\eps(u^\eps,w^\eps),
\end{align*}
since $\phi(z,v)\le \phi_\eps(z,v)$.
\end{proof}

\begin{prop}[Recovery sequence for $\fc_\eps$]
Let $(u,w)\in\Y$ with $\fc(u,w)<\infty$ be given, and define, for sufficiently small $\eps\in(0,1)$, the measure $(u^\eps,w^\eps)\in\Y$ by
\begin{align*}
u^\eps&=\begin{cases}u+\eps&\text{if }M=\infty,\\ u+\eps\left(1-\frac2{M}u\right)&\text{if }M<\infty,\end{cases}\\
w^\eps&=\begin{cases}w&\text{if }M=\infty,\\ \left(1-\frac{2\eps}{M}\right)w&\text{if }M<\infty.\end{cases}\\
\end{align*}
Then, $(u^\eps,w^\eps)\rightharpoonup^\ast (u,w)$ as $\eps\to 0$ and
\begin{align}\label{eq:epslim}
\lim_{\eps\to 0}\fc_\eps(u^\eps,w^\eps)=\fc(u,w).
\end{align}
\end{prop}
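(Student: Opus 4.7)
The plan is to verify in sequence: (i) admissibility $(u^\eps, w^\eps) \in \scrC$ with $u^\eps(0) = \hat u^\eps$; (ii) weak$*$ convergence $(u^\eps, w^\eps) \rightharpoonup^\ast (u, w)$; (iii) $\ent(u^\eps(1)) \to \ent(u(1))$; and (iv) $\int_I \Phi_\eps(u^\eps(t), w^\eps(t)) \dn t \to \int_I \Phi(u(t), w(t)) \dn t$. Items (i) and (ii) are quick: since the continuity equation \eqref{eq:contdist} is linear in $(u, w)$ and the maps defining $(u^\eps, w^\eps)$ are affine in $(u, w)$, the pair lies in $\scrC$ by direct substitution, with the initial datum matching by construction and $(u^\eps)^\ll \in (0, M)$ for sufficiently small $\eps$ (in the case $M < \infty$, for $\eps < M/2$); the total-variation difference $(u^\eps - u, w^\eps - w)$ is $O(\eps)$, which yields weak$*$ convergence.

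For (iii), under \eqref{eq:E2} I would use pointwise convergence of $E((u^\eps)^\ll(1, \cdot))$ and dominated convergence (with dominant supplied by the doubling condition \eqref{eq:doubling} for $M = \infty$, or by $L^\infty$-bounds for $M < \infty$) to handle $\int_J E \dn x$, while weak$*$ continuity plus continuity of $V$ handles $\int V \dn u(1)$. Under \eqref{eq:E4}, the affine perturbation preserves $H^1$-regularity of $u(1)$, and continuity of $G$ then gives convergence of the gradient term together with the $E$- and $V$-arguments above.

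The heart of the proof is (iv). Decomposing $\Phi_\eps$ into absolutely continuous and singular parts, the singular contribution is handled using Lemma \ref{lem:rec}(d): since $\phi_\eps^\rec = \phi^\rec$, since $(u^{\eps,\perp}, w^{\eps,\perp})$ is a scalar multiple (by $\alpha(\eps) \to 1$, where $\alpha(\eps) = 1$ for $M = \infty$ and $\alpha(\eps) = 1 - 2\eps/M$ for $M < \infty$) of $(u^\perp, w^\perp)$, and since $\phi^\rec$ is positively $1$-homogeneous, this contribution converges to the singular part of $\int_I \Phi(u(t), w(t)) \dn t$. For the AC part, pointwise $\phi_\eps((u^\eps)^\ll(t, x), (w^\eps)^\ll(t, x)) \to \phi(u^\ll(t, x), w^\ll(t, x))$ almost everywhere where $u^\ll > 0$, and DCT (with dominant built from monotonicity of $m$---recall concavity combined with $m(0) = 0$ forces $m$ nondecreasing on $(0, M)$---and the integrability $\phi(u^\ll, w^\ll) \in L^1(I \times J)$ guaranteed by $\fc(u, w) < \infty$) gives the convergence. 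The hardest step will be showing that the extra $\eps$-term in the numerator of $\phi_\eps$ does not contribute a residual in the limit, i.e. that $\int_I \int_J \eps / m((u^\eps)^\ll) \dn x \dn t \to 0$: this is delicate because $\eps / m(z)$ need not vanish when $z$ is of order $\eps$, so the argument will require carefully exploiting convexity of $1/m$, a Jensen-type estimate against the integrability of $\phi(u^\ll, w^\ll)$, and the specific growth of $m$ near $0$ provided by \eqref{eq:M}.
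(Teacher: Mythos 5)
Your outline follows the same route as the paper's proof: verify admissibility and weak$\ast$ convergence, treat the energy by dominated convergence (doubling condition for $M=\infty$, $L^\infty$-bounds for $M<\infty$), and split the action into singular and absolutely continuous parts. Steps (i)--(iii) are essentially correct, up to one small inaccuracy: for \eqref{eq:E4} with $M<\infty$ one has $\partial_x u^\eps(1)=(1-\tfrac{2\eps}{M})\partial_x u(1)$, and continuity of $G$ alone does not suffice -- the integrable dominant comes from convexity, $G((1-\lambda)p)\le(1-\lambda)G(p)+\lambda G(0)$. The genuine gap is in step (iv): you correctly reduce the whole problem to the residual $\int_I\int_J\eps/m((u^\eps)^\ll)\,\dn x\,\dn t$ and correctly diagnose why it is delicate, but you then only list candidate tools (``convexity of $1/m$, a Jensen-type estimate against the integrability of $\phi(u^\ll,w^\ll)$, the growth of $m$ near $0$'') without producing an argument. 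As written, the central step of the proof is announced rather than proved, and a Jensen-type estimate against $\phi(u^\ll,w^\ll)$ is not the right tool -- the residual has nothing to do with $w$.

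The mechanism the paper actually uses is more elementary. By \eqref{eq:M}, $m$ is concave with $m(0)=0$, hence nondecreasing when $M=\infty$ and satisfying $m(\eps)\ge C\eps$ for $\eps\in(0,1)$ (take $C=m(1)$, by concavity between $0$ and $1$). Therefore
\[
\frac{\eps}{m(u^\ll+\eps)}\;\le\;\frac{\eps}{m(\eps)}\;\le\;\frac1{C},
\]
a uniform bound on the bounded domain $I\times J$; together with the dominant $\phi(u^\ll,w^\ll)\in L^1(I\times J)$ for the main term $(w^\ll)^2/m(u^\ll+\eps)$, dominated convergence yields the limit of the absolutely continuous part. (For $M<\infty$ the paper instead derives explicit lower bounds on $m(u^\eps)$ by splitting $[0,M]$ into a neighbourhood of $0$, a middle region where $m$ is bounded below, and a neighbourhood of $M$, and again concludes by domination.) Your instinct that something is delicate here is well founded, but the delicacy is not resolved by Jensen: as you yourself observe, $\eps/m(z)$ need not vanish for $z$ of order $\eps$, so on the set $\{u^\ll=0\}$ the pointwise limit of the integrand can fail to be $\phi(0,0)=0$ whenever $m'(0^+)<\infty$ (e.g.\ under \eqref{eq:wmob}); the dominated-convergence argument -- in the paper as in your sketch -- therefore really uses pointwise convergence on $\{u^\ll>0\}$, where the residual does tend to $0$. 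A complete write-up should make this pointwise convergence, and the set on which it holds, explicit rather than deferring it to an unspecified estimate.
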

\begin{proof}
Obviously, $u^\eps(0)=\hat u^\eps$ and $(u^\eps,w^\eps)\in\scrC$ for all $\eps\in(0,1)$, since $\fc(u,w)<\infty$, and $(u^\eps,w^\eps)\rightharpoonup^\ast (u,w)$ as $\eps\to 0$. We first consider the case $M=\infty$. There, we have
\begin{align*}
\int_I\Phi_\eps(u^\eps(t),w^\eps(t))\dd t=\int_I\int_J \phi_\eps(u^\ll+\eps,w^\ll)\dd x\dd t+\int_I\int_J \phi^\rec\left(\frac{\dd (u^\perp,w^\perp)}{\dd |(u^\perp,w^\perp)|}\right)\dd |(u^\perp,w^\perp)|.
\end{align*}
We show that
\begin{align}\label{eq:philim}
\lim_{\eps\to 0}\int_I\int_J \phi_\eps(u^\ll+\eps,w^\ll)\dd x\dd t=\int_I\int_J \phi(u^\ll,w^\ll)\dd x\dd t.
\end{align}
Clearly, the integrand converges pointwise. Moreover, since $m$ is nondecreasing, concave and strictly increasing at $0$, there exists $C>0$ such that $m(\eps)\ge C\eps$ for all $\eps\in (0,1)$ and we obtain:
\begin{align*}
\phi_\eps(u^\ll+\eps,w^\ll)=\frac{(w^\ll)^2+\eps}{m(u^\ll+\eps)}\le \frac{(w^\ll)^2}{m(u^\ll)}+\frac{\eps}{m(\eps)}\le \frac{(w^\ll)^2}{m(u^\ll)}+\frac1{C},
\end{align*}
which is integrable (recall that $\fc(u,w)<\infty$). The dominated convergence theorem now yields \eqref{eq:philim}. In the case $M<\infty$, we argue similarly and assume that there exists $\bar z\in\left[ \frac{M}{2},M\right)$ such that $m'(\bar z)=0$ (the case $\bar z\in\left(0,\frac{M}{2}\right)$ can be treated in complete analogy). Recall that since $M<\infty$ and $\fc(u,w)<\infty$, we have $(u^\perp,w^\perp)=0$ and $u(t,x)\in[0,M]$ almost everywhere. Since $m$ is concave, one obtains the following elementary bounds on $m(u^\eps)$:
\begin{align*}
m(u^\eps)\ge \begin{cases}\max(m(u),m(\eps))&\text{if }u\in\left[0,\frac{M}{4}\right),\\ \min\limits_{z\in\left[\frac{M}{4},\frac{M+\bar z}{2}\right]}m(z)&\text{if }u\in\left[\frac{M}{4},\frac{M+\bar z}{2}\right],\\ \max(m(u),m(M-\eps))&\text{if }u\in\left(\frac{M+\bar z}{2},M\right].\end{cases}
\end{align*}
Hence, using that $(w^\eps)^2\le w^2$, one has:
\begin{align*}
\phi_\eps(u^\eps,w^\eps)&\le\phi_\eps(u^\eps,w)\\&\le \phi(u,w)+\frac{\eps}{m(\eps)}+\frac{\eps}{m(M-\eps)}+\frac{w^2+\eps}{\min\limits_{z\in\left[\frac{M}{4},\frac{M+\bar z}{2}\right]}m(z)}\\
&\le \phi(u,w)\left(1+\frac{\max\limits_{z\in[0,M]}m(z)}{\min\limits_{z\in\left[\frac{M}{4},\frac{M+\bar z}{2}\right]}m(z)}\right)+\eps\left(\frac{1}{m(\eps)}+\frac{1}{m(M-\eps)}+\frac1{\min\limits_{z\in\left[\frac{M}{4},\frac{M+\bar z}{2}\right]}m(z)}\right),
\end{align*}
and the dominated convergence theorem yields \eqref{eq:philim} also in this case.

For the energetic part, we first observe that
\begin{align*}
\lim_{\eps\to 0}\int_J V \dd u_\eps(1)=\int_J V\dd u(1)
\end{align*}
by weak$\ast$ convergence (extracting a subsequence if necessary). If $M<\infty$, one easily gets
\begin{align}\label{eq:Erec}
\lim_{\eps\to 0}\int_J E(u^\eps(1))\dd x=\int_J E(u(1))\dd x
\end{align}
by bounded convergence, using the continuity of $E$ and $\|u^\eps\|_{L^\infty(J)}\le M$. For the gradient-dependent part (if present), there is nothing to prove since $\partial_x u^\eps=\partial_x u$ on $J$ for all $\eps\in(0,1)$.

If $M=+\infty$, we have $\|u_\eps(1)-u(1)\|_{L^1(J)}\to 0$ thanks to superlinear growth \eqref{eq:Egrowth}. Using the doubling condition \eqref{eq:doubling}, we get
\begin{align*}
E(u^\eps(1))\le C(1+E(u(1))+E(\eps)),
\end{align*}
and the r.h.s. is integrable by assumption. Hence, dominated convergence yields \eqref{eq:Erec}. If a gradient-dependent part is present, we have $\partial_x u^\eps(1)=\left(1-\frac{2\eps}{M}\right)\partial_x u(1)$, so $\partial_x u^\eps(1)\to\partial_x u(1)$ strongly in $L^2(J)$ as $\eps\to 0$, since $G(\partial_x u)\in L^1(J)$ implies $\partial_x u\in L^2(J)$ by uniform convexity of $G$. On a subsequence, we have $\partial_x u^\eps(1)\to\partial_x u(1)$ pointwise a.e. in $J$. Hence, by continuity of $G$, also $G(\partial_x u^\eps(1))\to G(\partial_x u(1))$ pointwise almost everywhere. Now, convexity of $G$ yields
\begin{align*}
G(\partial_x u^\eps(1))\le \left(1-\frac{2\eps}{M}\right)G(\partial_x u(1))+\frac{2\eps}{M}G(0)\le G(\partial_x u(1))+\frac{2}{M}G(0),
\end{align*}
which is integrable. Again, the dominated convergence theorem gives
\begin{align*}
\lim_{\eps\to 0}\int_J G(\partial_x u^\eps(1))\dd x&=\int_J G(\partial_x u(1))\dd x.
\end{align*}
All in all, we have shown that $\lim\limits_{\eps\to 0}\ent(u^\eps)=\ent(u)$, and \eqref{eq:epslim} follows.
\end{proof}

The remaining part (d) in Theorem \ref{thm:gc} now is an immediate consequence of the parts (b)\&(c), using \cite[Thm. 1.21]{braides2002}.

\subsection{Convergence of the iterative scheme}
This section is concerned with the proof of Theorem \ref{thm:conv} which follows by Theorem \ref{thm:gc} and some diagonal arguments. As a preparation, notice that since $C(J)$ is separable, a family $(u_k)_{k\in\N}$ in $\meas^+(J)$ with uniformly bounded total mass converges weakly$\ast$ to some $u\in\meas^+(J)$ if and only if
\begin{align*}
\lim_{k\to\infty}\int_J f_r \dd u_k =\int_J f_r\dd u,
\end{align*}
for all $r\in\N$, where $(f_r)_{r\in\N}$ is dense in $C(J)$.

Let sequences $\tau_k\to 0$, $\eps_l\to 0$ and $\Delta_n\to 0$ be given and denote by $u^{\eps_l,\Delta_n}_{\tau_k,\hat u}$ a minimizer of $\fc_{\eps_l,\Delta_n}$ for $\tau=\tau_k$ and prescribed $\hat u$. Theorem \ref{thm:gc}(b) yields, for each $l\in\N$, $k\in\N$ and admissible $\hat u$, the existence of a (non-relabelled) subsequence $(\Delta_n)$ and a minimizer $u^{\eps_l}_{\tau_k,\hat u}$ of $\fc_{\eps_l}$ such that $u^{\eps_l,\Delta_n}_{\tau_k,\hat u}\rightharpoonup^\ast u^{\eps_l}_{\tau_k,\hat u}$ as $n\to\infty$.

Performing a diagonal argument, we see that for all $k\in\N$ and $\hat u$, there exists a subsequence $(\Delta_n)$ such that for all $l\in\N$, one has $u^{\eps_l,\Delta_n}_{\tau_k,\hat u}\rightharpoonup^\ast u^{\eps_l}_{\tau_k,\hat u}$ as $n\to\infty$. 

In particular, for all $k\in\N$, $r\in\N$ and $\hat u$, there exists a subsequence $(\Delta_{n_l})$ such that for all $l\in\N$:
\begin{align*}
\left|\int_J f_r \dd u^{\eps_l,\Delta_{n_l}}_{\tau_k,\hat u}-\int_J f_r\dd u^{\eps_l}_{\tau_k,\hat u}\right|&\le\frac1{l},
\end{align*}
where $(f_r)_{r\in\N}$ is dense in $C(J)$.

By Theorem \ref{thm:gc}(d), there exists for every $k\in\N$ and $\hat u$ a (non-relabelled) subsequence $(\eps_l)$ and a minimizer $u_{\tau_k,\hat u}$ of $\fc$ such that $u^{\eps_l}_{\tau_k,\hat u}\rightharpoonup^\ast u_{\tau_k,\hat u}$ as $l\to\infty$. Hence, there exists for all $k\in\N$, $r\in\N$ and $\hat u$ a subsequence $(\eps_l)$ and a further subsequence of $(\Delta_{n_l})$, such that
\begin{align*}
\lim_{l\to\infty}\int_J f_r \dd u^{\eps_l,\Delta_{n_l}}_{\tau_k,\hat u}-\int_J f_r\dd u_{\tau_k,\hat u}=0,
\end{align*}
as $l\to\infty$.

In particular, for all $r\in\N$ and $\hat u$, there exist subsequences $(\eps_{l_k})$ and $(\Delta_{n_k})$ such that for all $k\in\N$:
\begin{align*}
\left|\int_J f_r \dd u^{\eps_{l_k},\Delta_{n_k}}_{\tau_k,\hat u}-\int_J f_r\dd u_{\tau_k,\hat u}\right|&\le\frac1{k}.
\end{align*}

Since for each fixed $k\in\N$ the fully discrete function $u^{\eps_{l_k},\Delta_{n_k}}_{\tau_k}$ and the semi-discrete function $u_{\tau_k}$ are constant w.r.t. $t$ on the same subintervals of $\R_{\ge 0}$, two further diagonal arguments yield the existence of subsequences $(\eps_{l_k})$ and $(\Delta_{n_k})$ such that for all $k\in\N$, $t\ge 0$ and $r\in\N$:
\begin{align*}
\left|\int_J f_r \dd u^{\eps_{l_k},\Delta_{n_k}}_{\tau_k}(t)-\int_J f_r\dd u_{\tau_k}(t)\right|&\le\frac1{k}.
\end{align*}

By the assumption in Theorem \ref{thm:conv} on the convergence of the minimizing movement scheme \eqref{eq:mms}, there exists a subsequence $(\tau_{k_h})$ and a weak solution $u$ to \eqref{eq:pde} such that for all $t\ge 0$ and all $r\in\N$:
\begin{align*}
\lim_{h\to\infty}\int_J f_r \dd u_{\tau_{k_h}}(t) =\int_J f_r\dd u(t).
\end{align*}
Hence, there exist further subsequences $(\eps_{l_h})$ and $(\Delta_{n_h})$ such that for all $t\ge 0$ and $r\in\N$:
\begin{align*}
\lim_{h\to\infty}\int_J f_r \dd u^{\eps_{l_h},\Delta_{n_h}}_{\tau_{k_h}}(t) =\int_J f_r\dd u(t),
\end{align*}
completing the proof of Theorem \ref{thm:conv}.\hfill\qed

\section{Numerical simulation}\label{sec:num}
In this section, we illustrate our numerical scheme with several examples. The simulations have been performed with MATLAB using Newton's method for solving the Euler-Lagrange system associated with the convex minimization problem \eqref{eq:mmsdisc}.

\subsection{Fokker-Planck type equations}
This section is concerned with second-order equations of the form
\begin{align}\label{eq:fpe}
\partial_t u=\partial_x^2 u^q+\partial_x (u\partial_x V),
\end{align}
for $q\ge 1$, which fit into our framework with $m(z)=z$ (cf. \eqref{eq:wmob}) and
\begin{align*}
E(z)=\begin{cases}z\log z-z+1&\text{if $q=1$,}\\ \frac1{q-1}z^q&\text{if $q>1$,}\end{cases}
\end{align*}
see \eqref{eq:E2}. We particularly consider the cases $q=1$ (linear diffusion) and $q=2$ (quadratic diffusion of porous medium type). The confinement potential used here is quadratic,
\begin{align*}
V(x)=50\left(x-0.5\right)^2.
\end{align*}
For the simulation, we put $\Nt=2$, $\Nx=300$, $\tau=10^{-4}$ and $\eps=10^{-8}$ and the initial datum
\begin{align*}
u_0(x)=\cos(8\pi x)+1.
\end{align*}
Figures \ref{fig:he} and \ref{fig:pme} show the spatially discrete function $u_\tau^{\eps,\Delta}(t)$ constructed via the scheme \eqref{eq:mmsdisc} at different time points $t$ for $q=1$ and $q=2$, respectively.

\begin{figure}[h]
\centering
\subfigure[$t=0$.]{\includegraphics[width=0.49\textwidth]{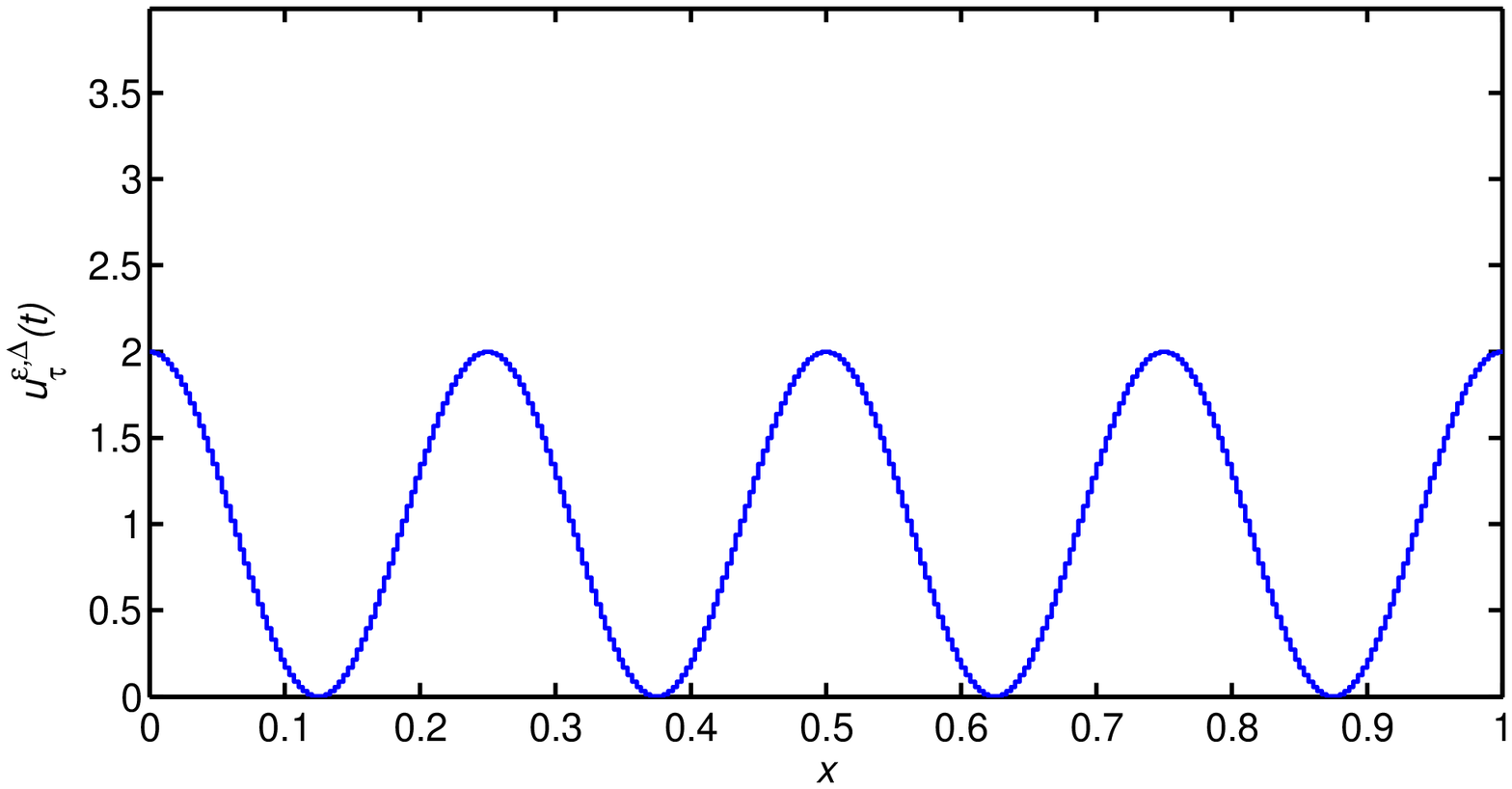}\label{he:a}}
\subfigure[$t=10\tau=10^{-3}$.]{\includegraphics[width=0.49\textwidth]{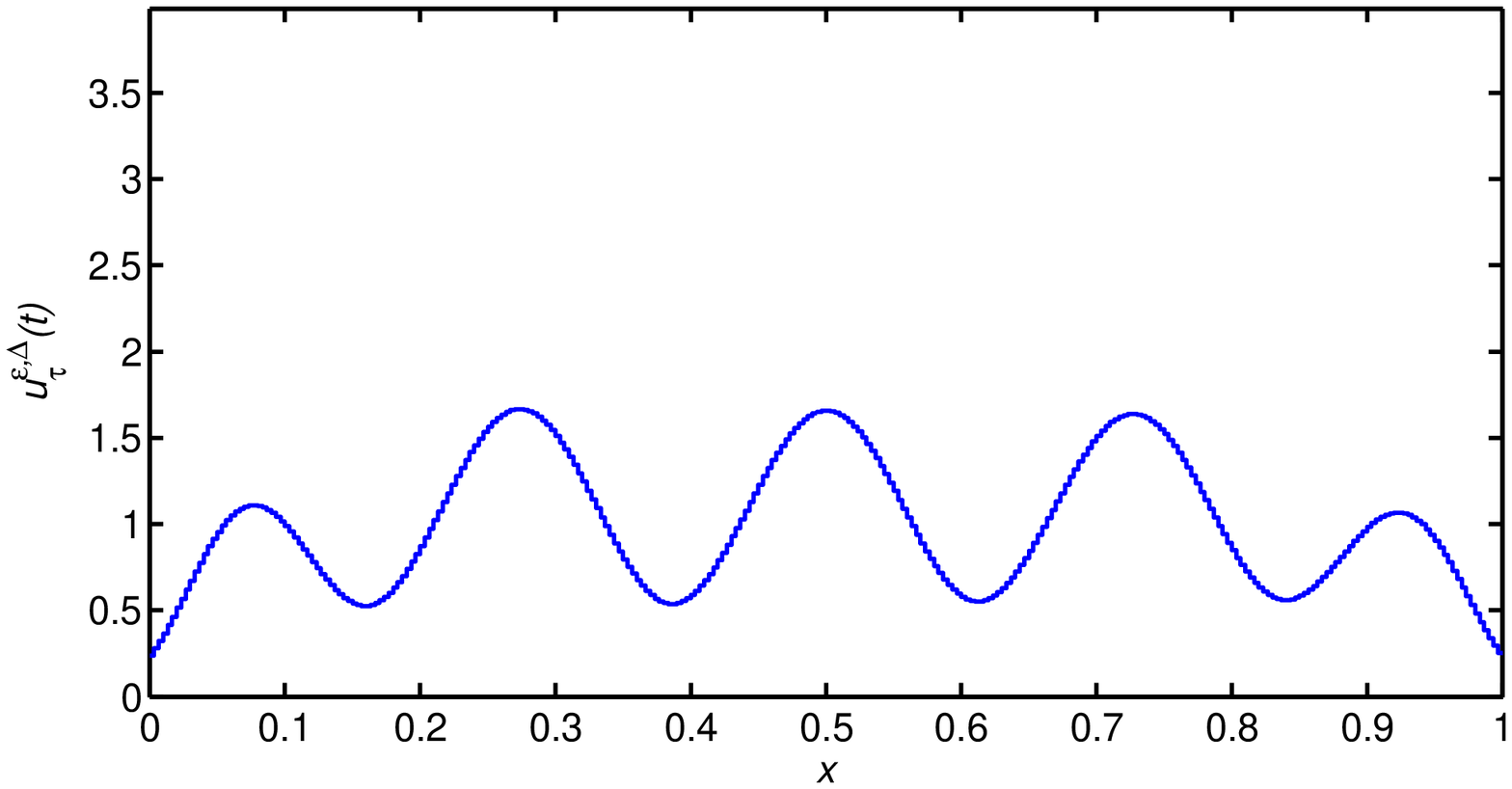}\label{he:b}}
\subfigure[$t=100\tau=0.01$.]{\includegraphics[width=0.49\textwidth]{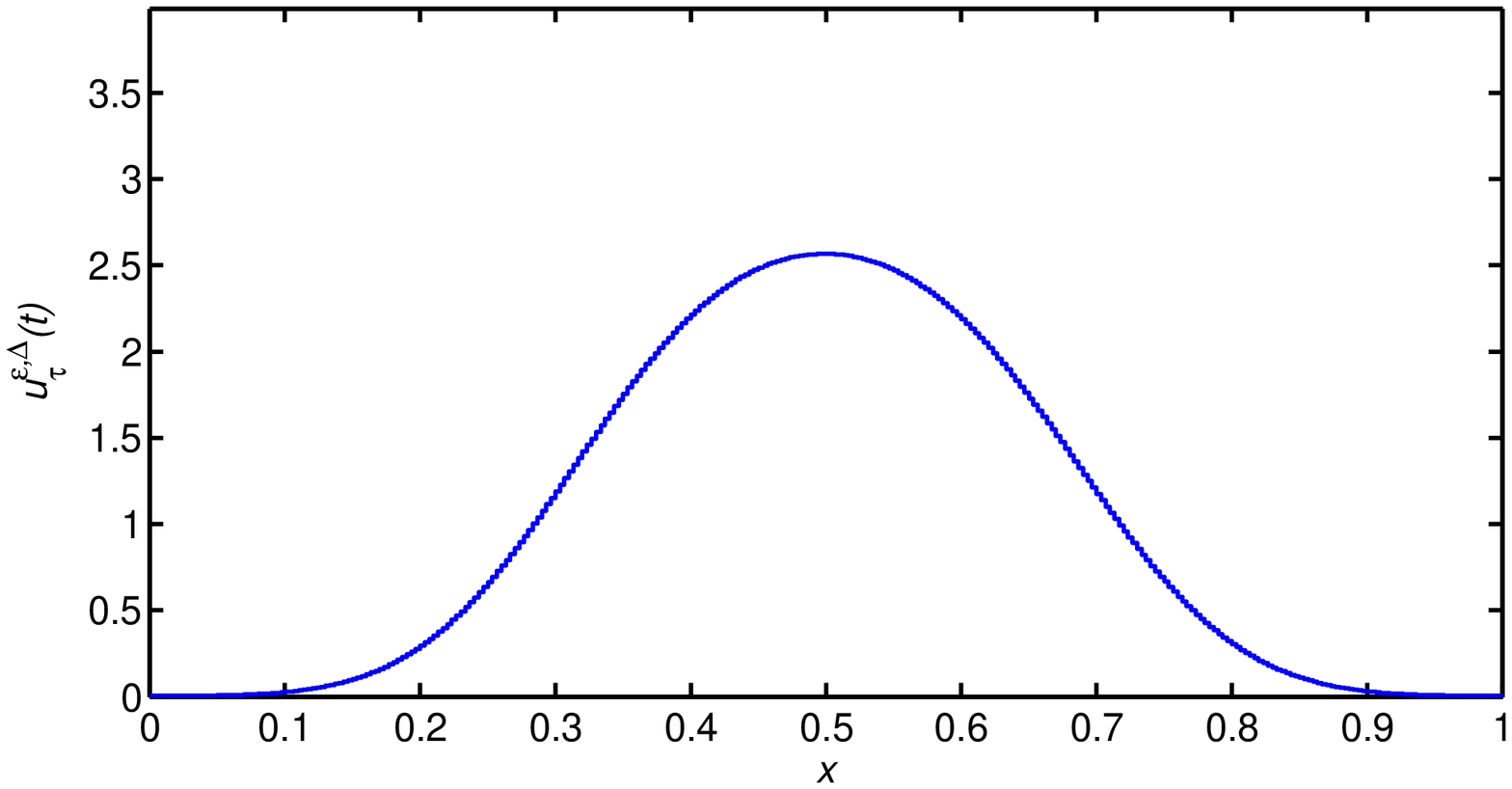}\label{he:c}}
\subfigure[$t=5000\tau=0.05$.]{\includegraphics[width=0.49\textwidth]{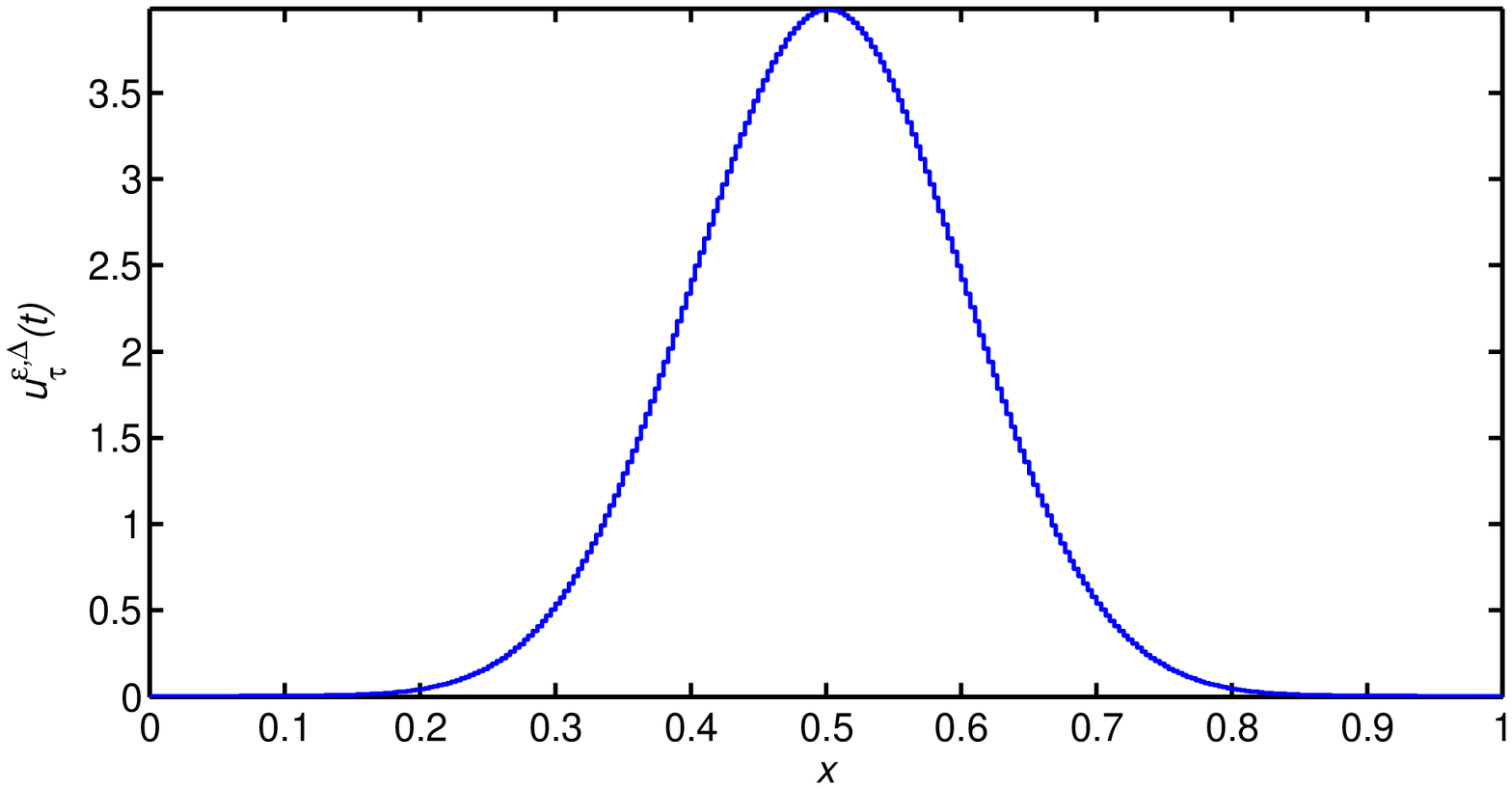}\label{he:d}}
        \caption{Numerical simulation of $u_\tau^{\eps,\Delta}(t)$ for \eqref{eq:fpe} with $q=1$.} 
        \label{fig:he}
\end{figure}

\begin{figure}[h]
\centering
\subfigure[$t=0$.]{\includegraphics[width=0.49\textwidth]{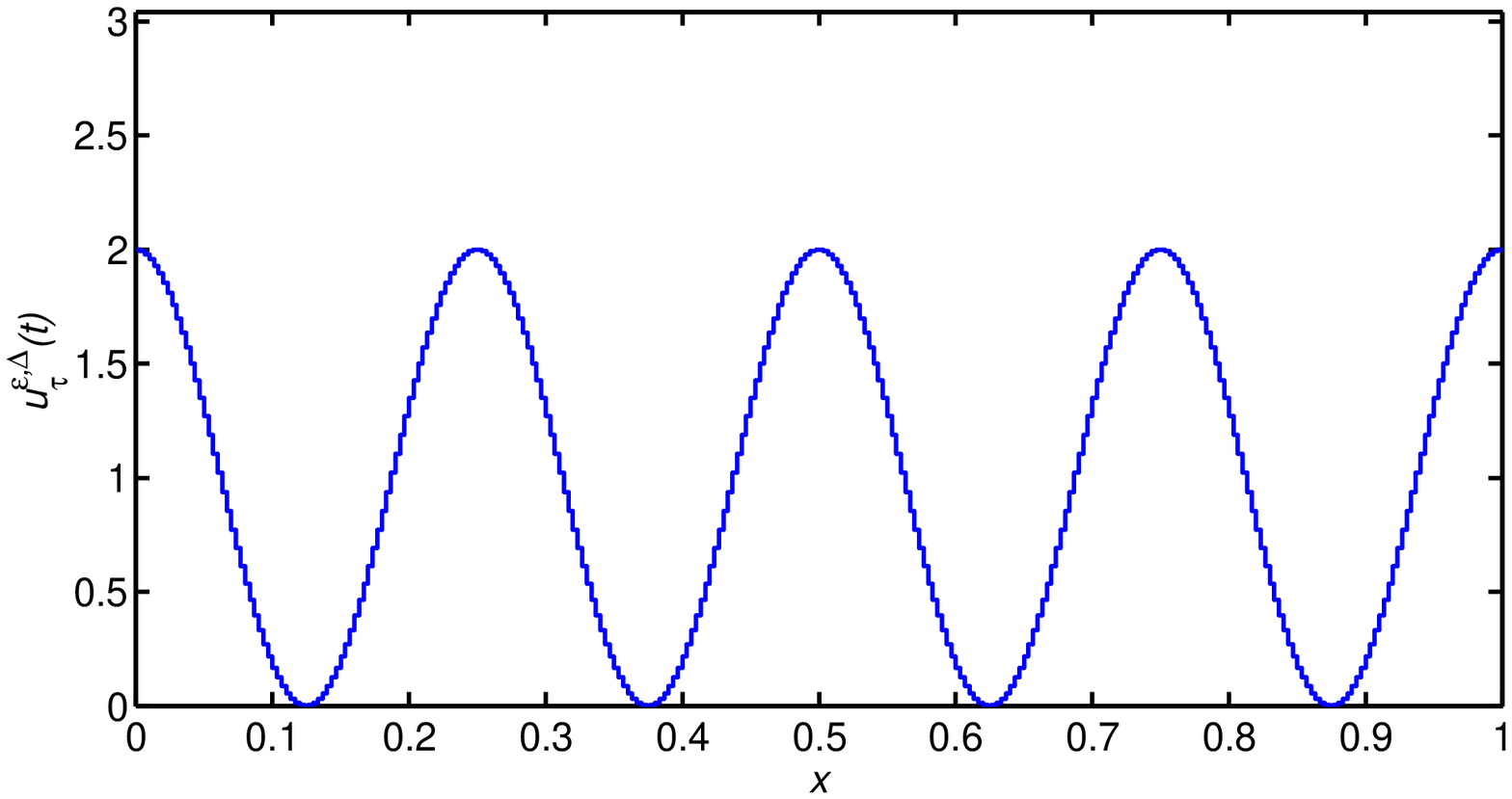}\label{pme:a}}
\subfigure[$t=10\tau=10^{-3}$.]{\includegraphics[width=0.49\textwidth]{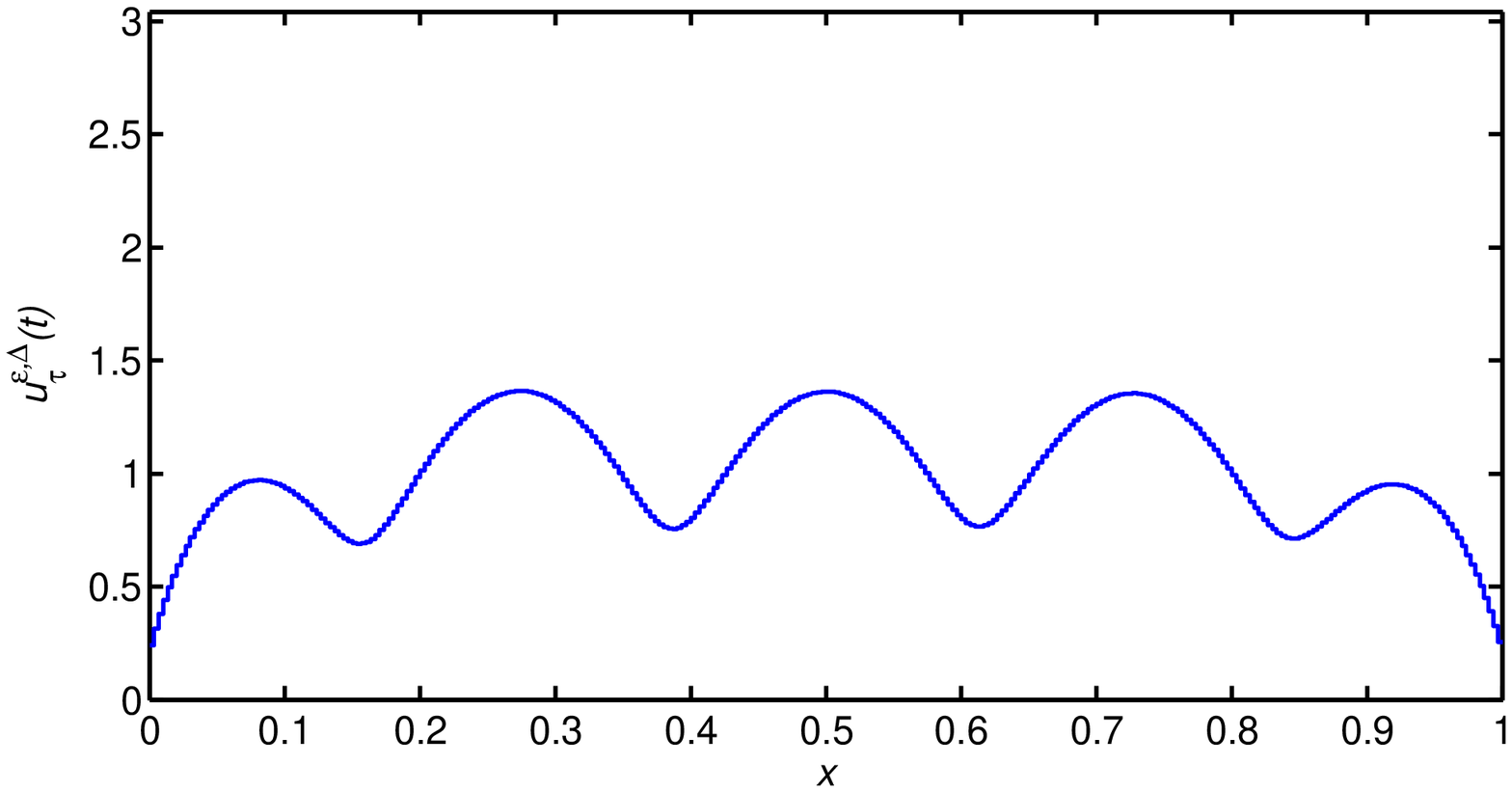}\label{pme:b}}
\subfigure[$t=100\tau=0.01$.]{\includegraphics[width=0.49\textwidth]{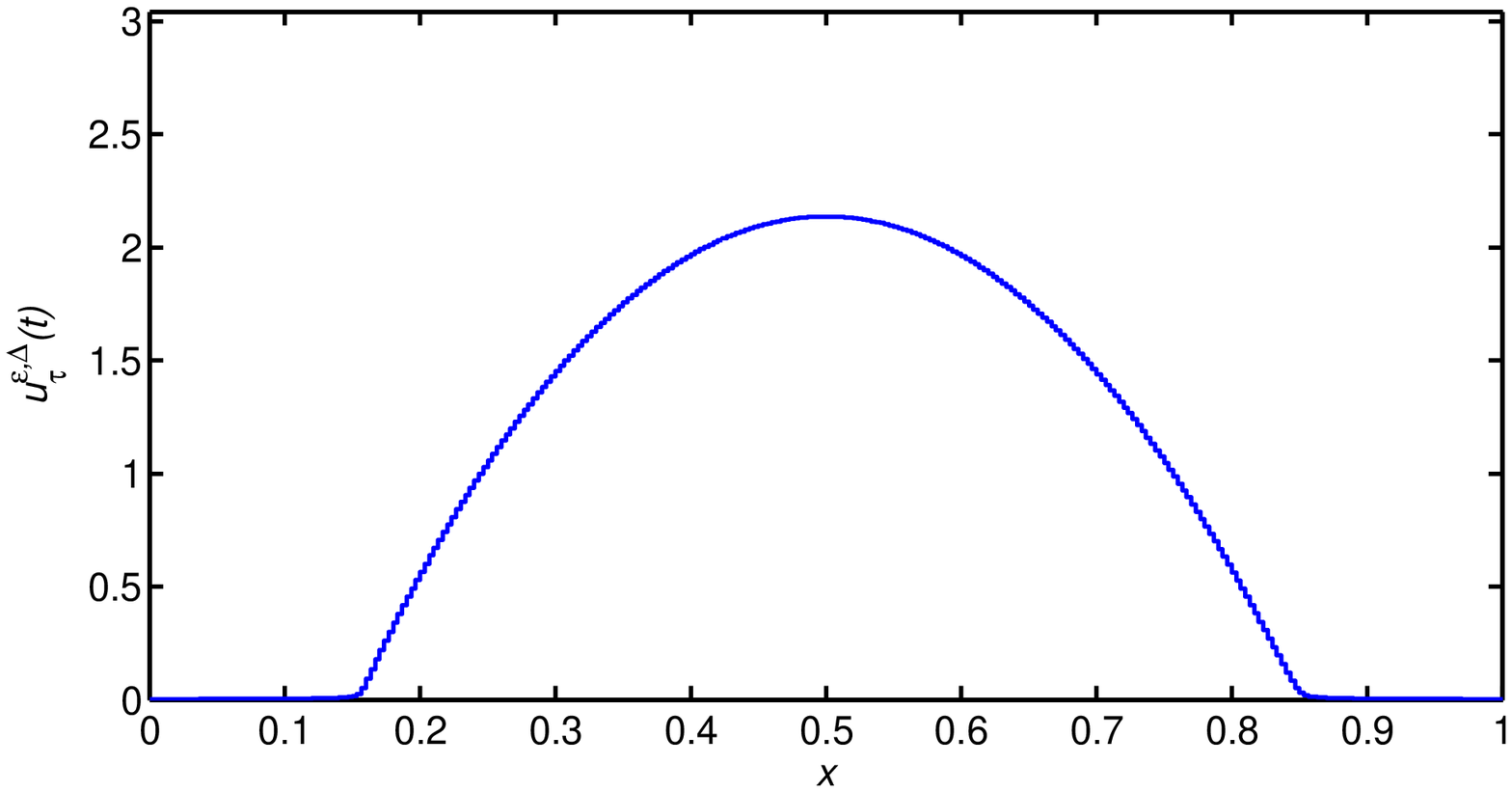}\label{pme:c}}
\subfigure[$t=5000\tau=0.05$.]{\includegraphics[width=0.49\textwidth]{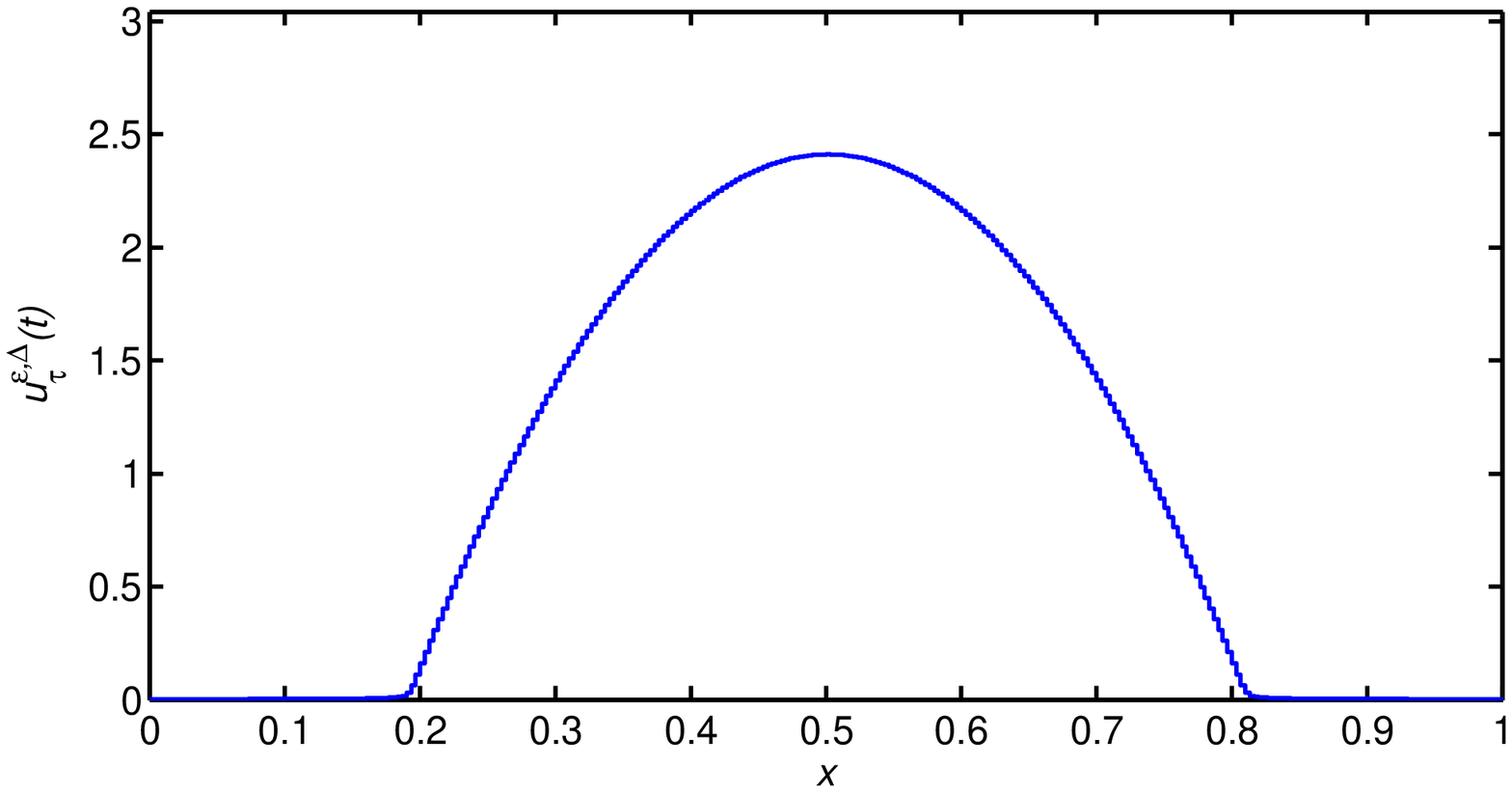}\label{pme:d}}
        \caption{Numerical simulation of $u_\tau^{\eps,\Delta}(t)$ for \eqref{eq:fpe} with $q=2$.} 
        \label{fig:pme}
\end{figure}

Observe that this approximate solution for \eqref{eq:fpe} resembles well the expected analytical solution also incorporating the long-time behaviour \cite{vazquez2007}: as $t\to \infty$, $u_\tau^{\eps,\Delta}(t)$ converges (at an exponential rate) to an approximate version of an (in this case globally stable) equilibrium of \eqref{eq:fpe} which is of Gaussian type for $q=1$ and of Barenblatt-Pattle type for $q>1$ (see Figures \ref{he:d}\&\ref{pme:d}).

\subsection{The Cahn-Hilliard equation}
One of the most interesting evolution equations of fourth order which possess gradient flow structure with respect to the generalized Wasserstein distance $\W_m$ for genuinely nonlinear mobility $m(z)=z(1-z)$ is the \emph{Cahn-Hilliard} equation
\begin{align}\label{eq:cahnhilliard}
\partial_t u=-\theta\partial_x(u(1-u)\partial_x^3 u)+\partial_x(u(1-u)\partial_x (u^2(1-u)^2)),
\end{align}
for $\theta>0$. There, the mobility is such that $M=1$, and the free energy functional is of the form \eqref{eq:E4} with
\begin{align*}
G(p)=\frac{\theta}{2}p^2,\quad E(z)=z^2(1-z)^2\quad\text{and}\quad V\equiv 0.
\end{align*}
In the following, we show simulations of our scheme \eqref{eq:mmsdisc} for different values of the parameter $\theta$. Equation \eqref{eq:cahnhilliard} models the process of phase separation of two components of a binary liquid or alloy; the parameter $\theta$ incorporates the length of the transitions between regions in space where only one of the two components is rich (corresponding to $u\approx 0$ and $u\approx 1$).

Our choice for the discretization parameters is $\Nt=2$, $\Nx=200$ and $\eps=10^{-9}$, and we use the initial condition
\begin{align*}
u_0(x)=0.5(\cos(8\pi x)+1),
\end{align*}
which gives rise to the approximation $u_0^{\eps,\Delta}$ as shown in Figure \ref{fig:chic}.
\begin{figure}[h]
\centering
\includegraphics[width=0.49\textwidth]{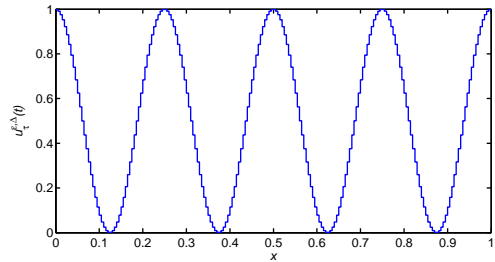}
\caption{Initial approximation $u_0^{\eps,\Delta}$ corresponding to $u_\tau^{\eps,\Delta}(t)$ at $t=0$.}
\label{fig:chic} 
\end{figure}

Figures \ref{fig:ch1} and \ref{fig:ch2} show the spatially discrete function $u_\tau^{\eps,\Delta}(t)$ constructed via the scheme \eqref{eq:mmsdisc} at different time points $t$ for $\theta=0.004$ and $\tau=0.06$ and $\theta=0.001$ and $\tau=0.01$, respectively.
\begin{figure}[H]
\centering
\subfigure[$t=2\tau=0.12$.]{\includegraphics[width=0.49\textwidth]{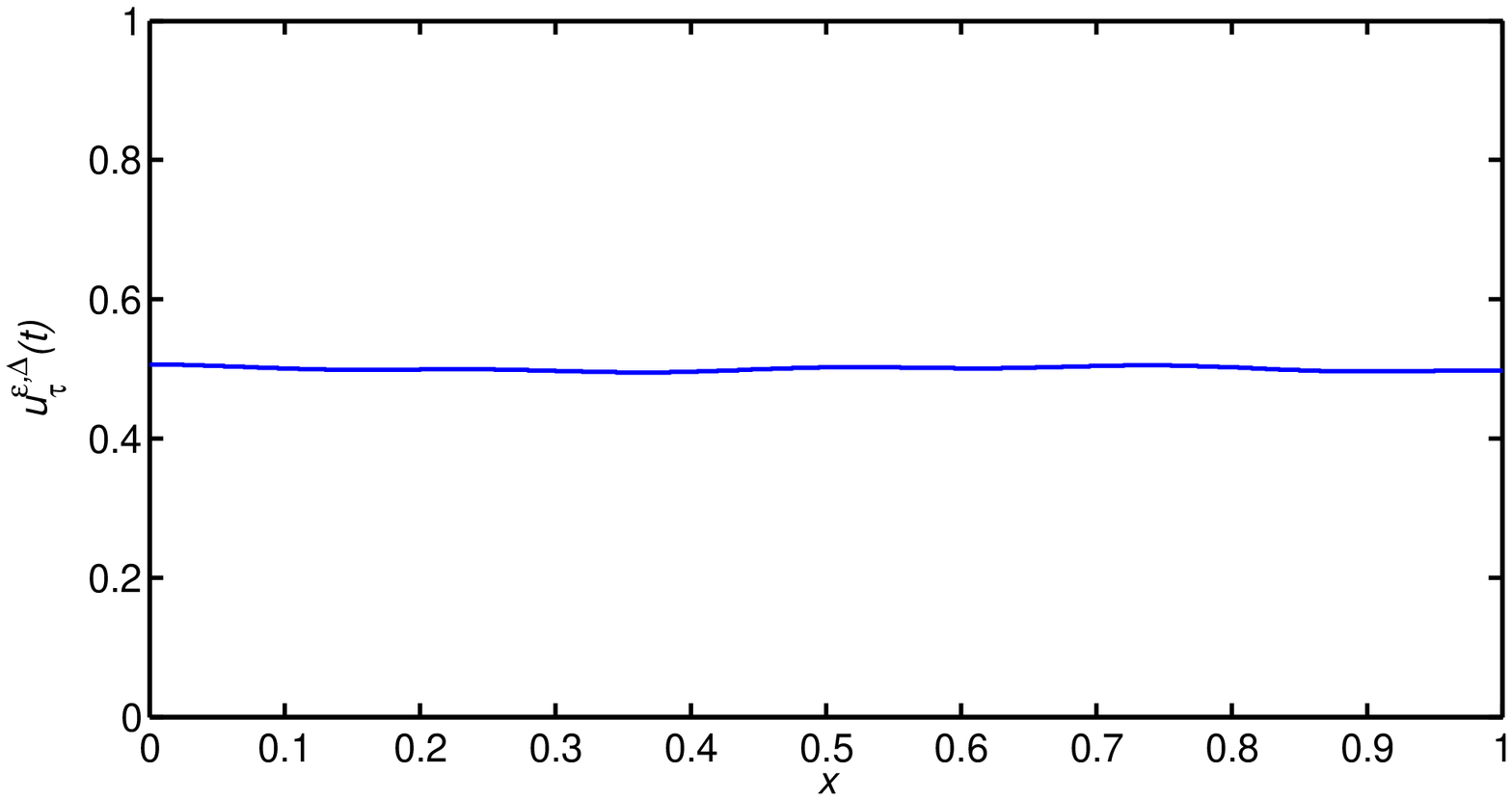}\label{ch1:a}}
\subfigure[$t=100\tau=6$.]{\includegraphics[width=0.49\textwidth]{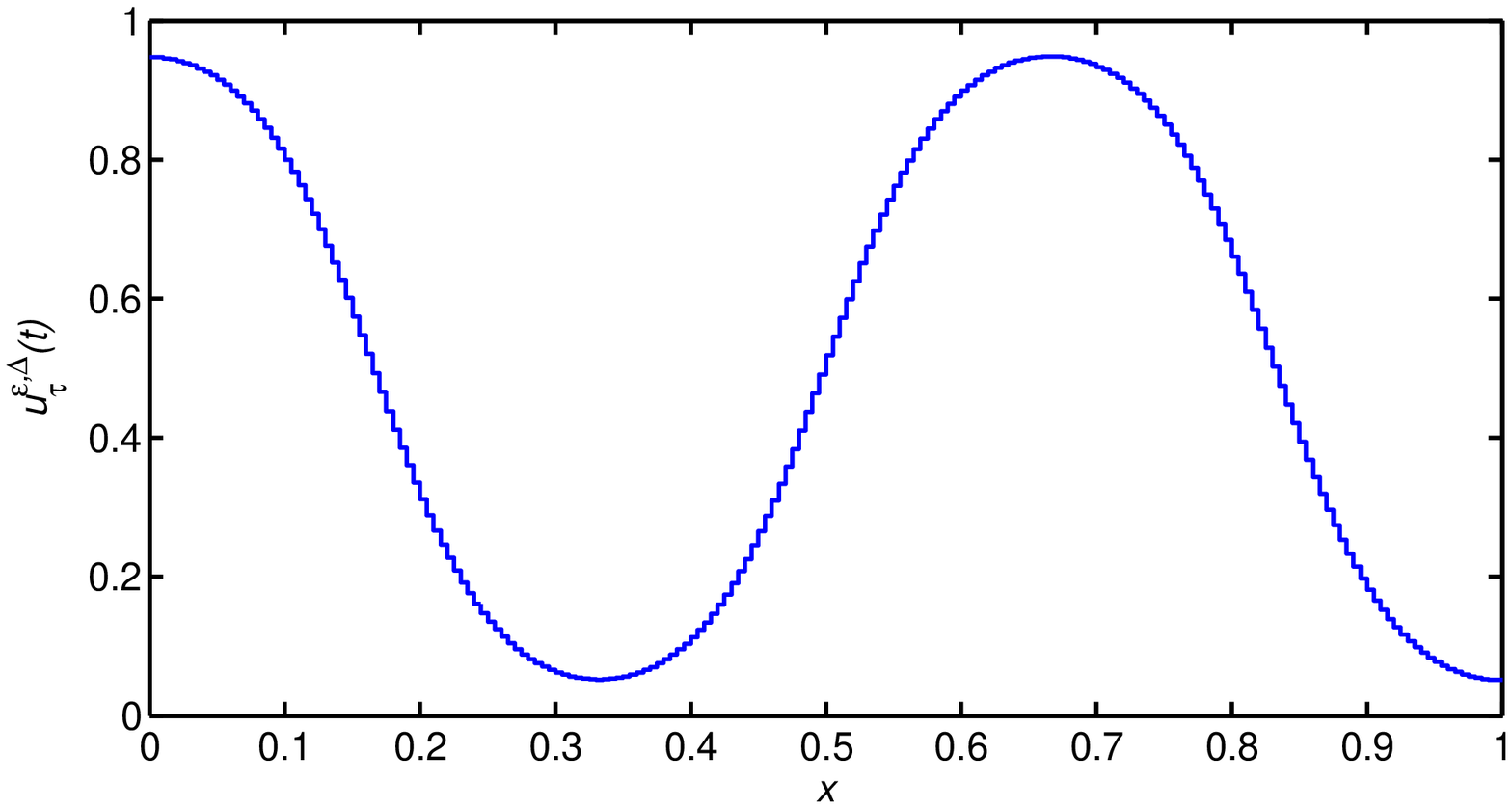}\label{ch1:b}}
\subfigure[$t=2000\tau=120$.]{\includegraphics[width=0.49\textwidth]{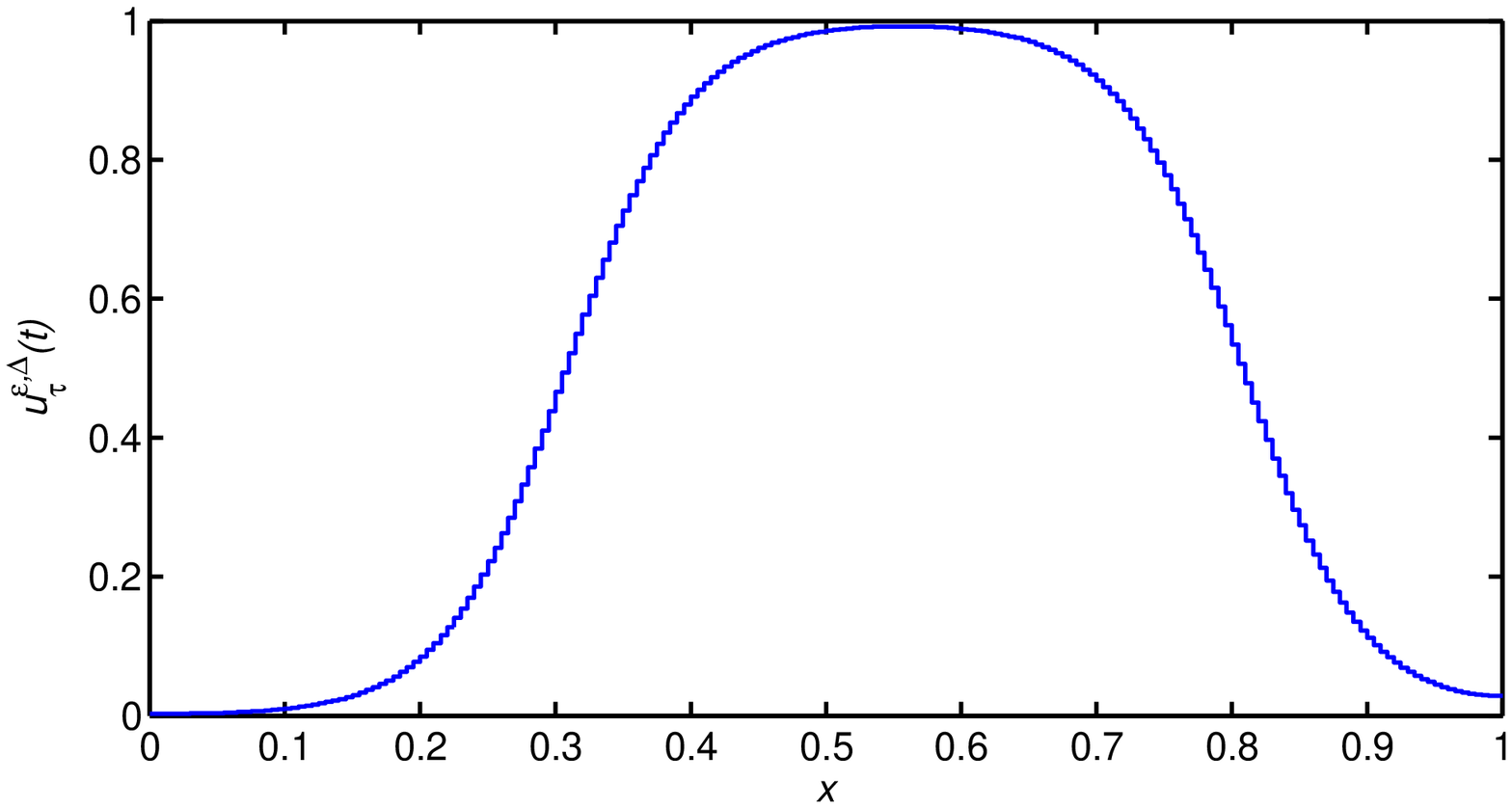}\label{ch1:c}}
\subfigure[$t=11000\tau=660$.]{\includegraphics[width=0.49\textwidth]{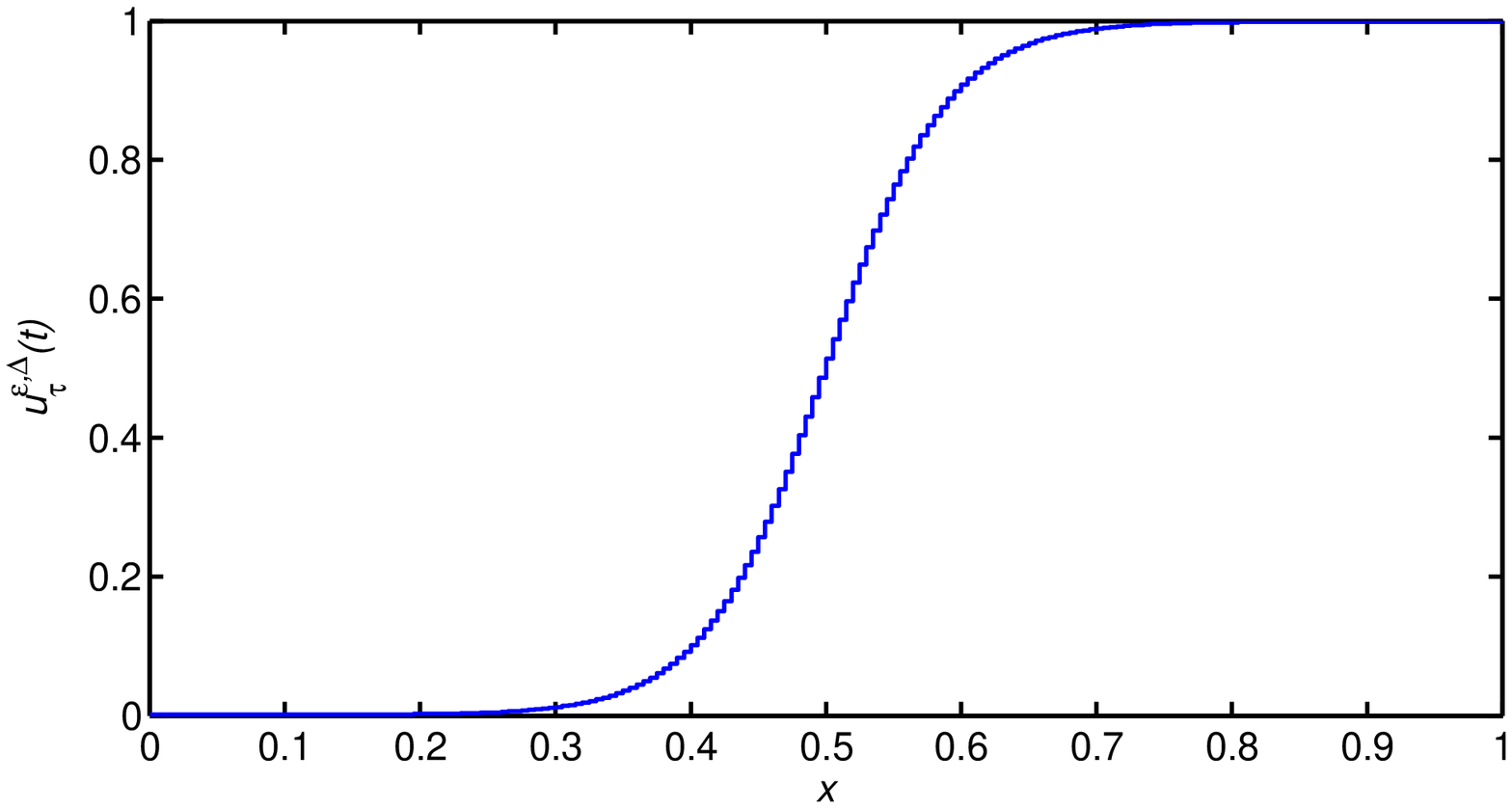}\label{ch1:d}}
        \caption{Numerical simulation of $u_\tau^{\eps,\Delta}(t)$ for \eqref{eq:cahnhilliard} with $\theta=0.004$ and $\tau=0.06$.} 
        \label{fig:ch1}
\end{figure}

\begin{figure}[H]
\centering
\subfigure[$t=50\tau=0.5$.]{\includegraphics[width=0.49\textwidth]{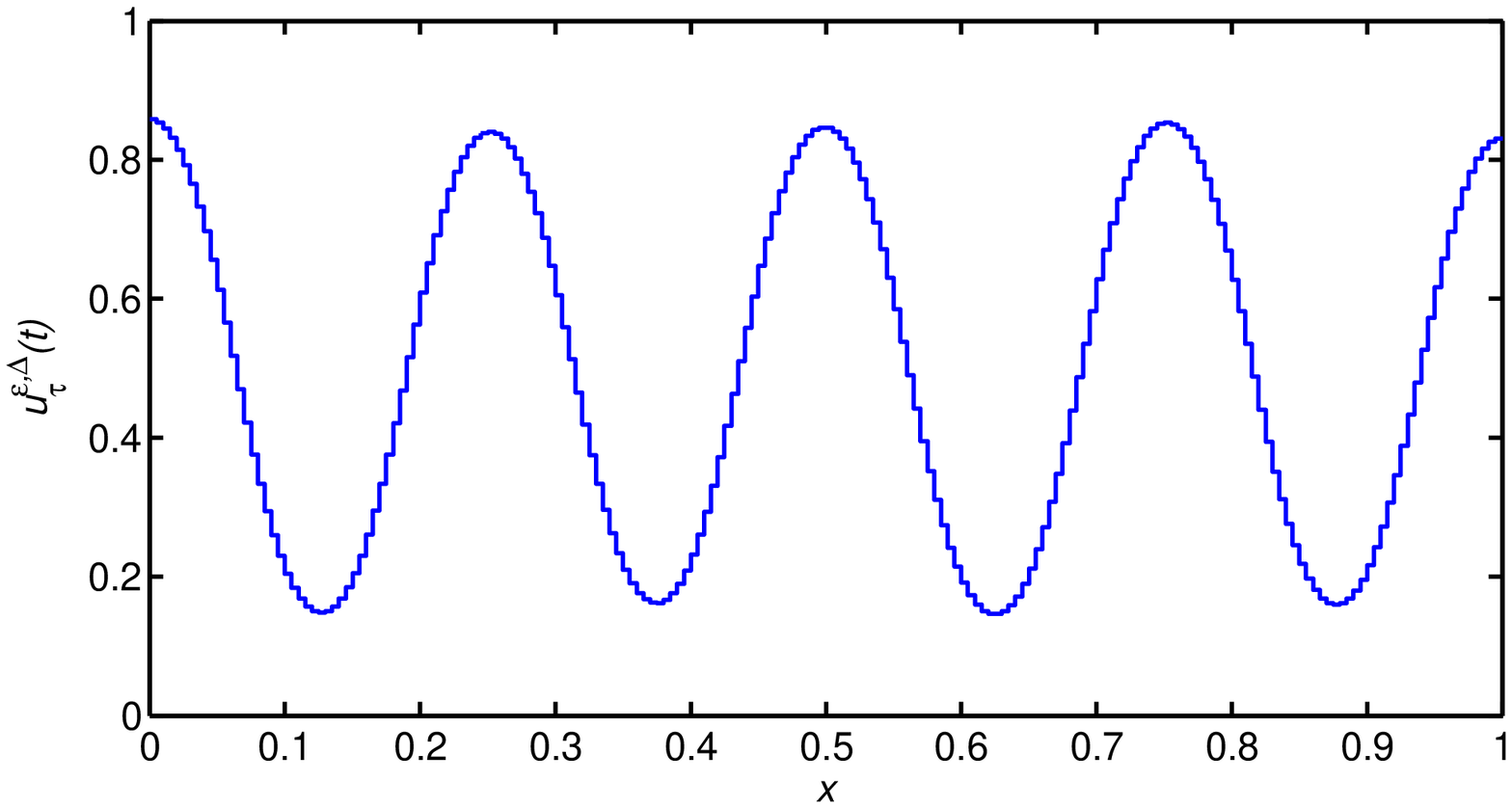}\label{ch2:a}}
\subfigure[$t=100\tau=1$.]{\includegraphics[width=0.49\textwidth]{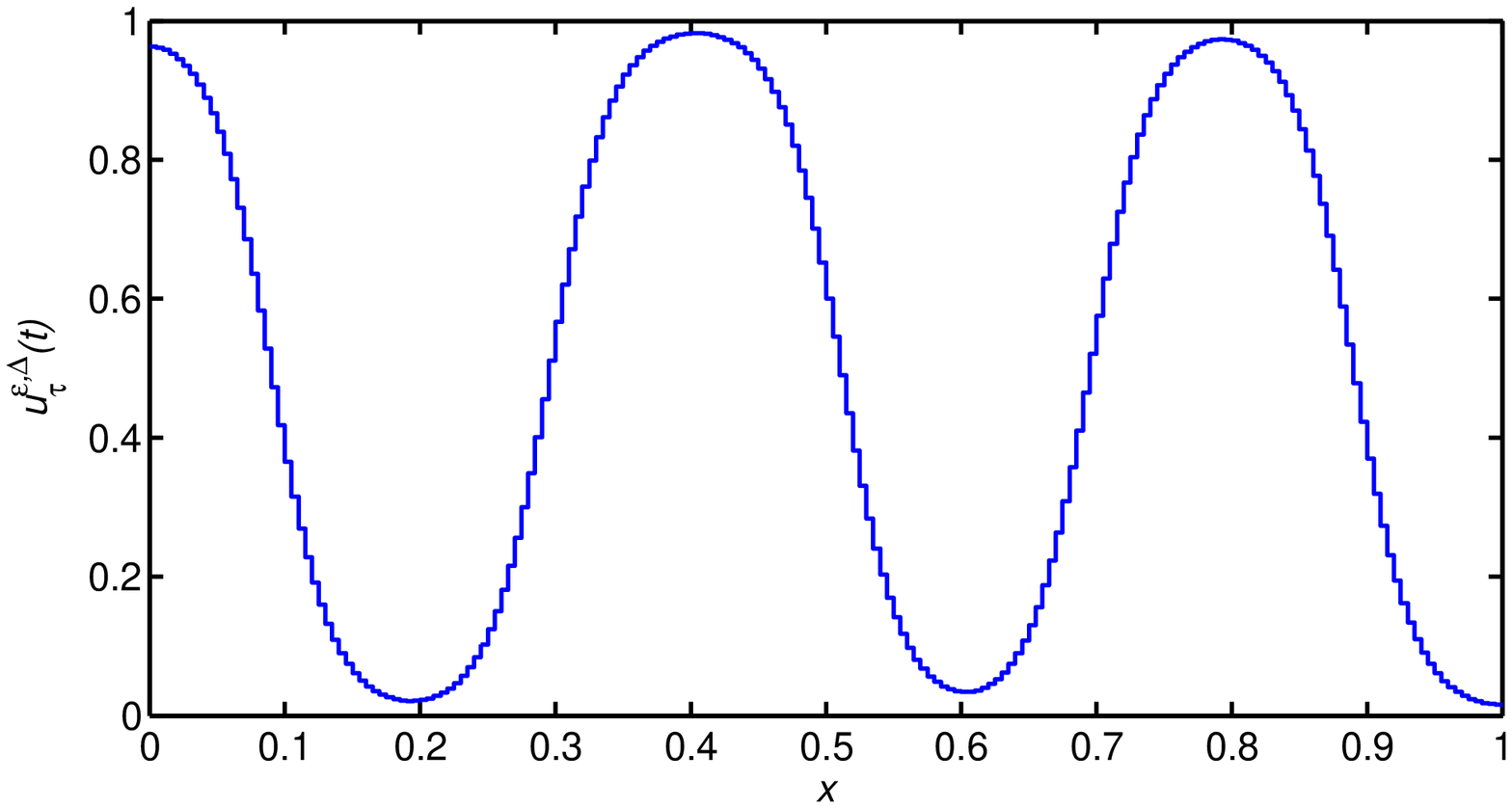}\label{ch2:b}}
\subfigure[$t=3600\tau=36$.]{\includegraphics[width=0.49\textwidth]{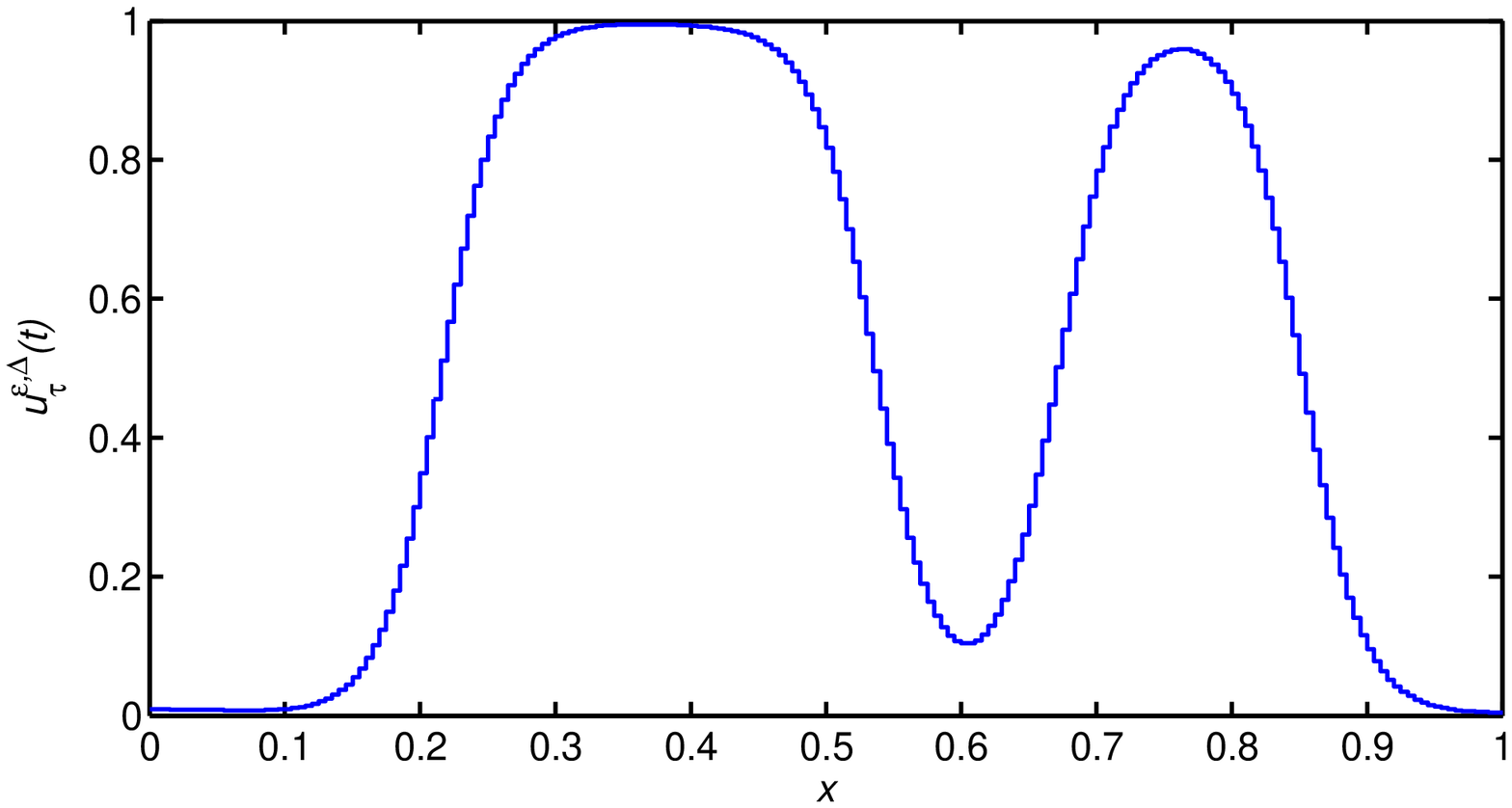}\label{ch2:c}}
\subfigure[$t=10000\tau=100$.]{\includegraphics[width=0.49\textwidth]{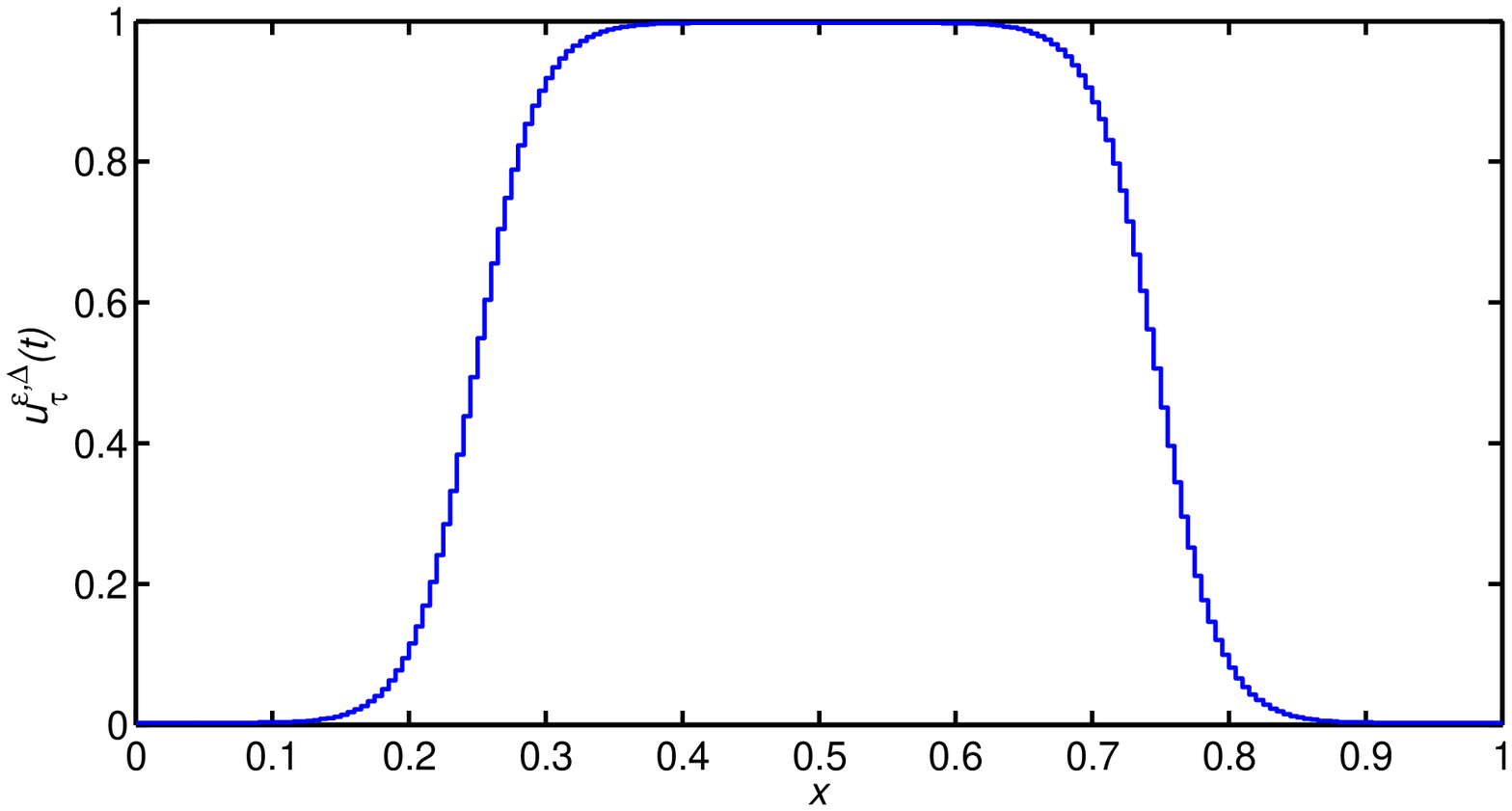}\label{ch2:d}}
        \caption{Numerical simulation of $u_\tau^{\eps,\Delta}(t)$ for \eqref{eq:cahnhilliard} with $\theta=0.001$ and $\tau=0.01$.} 
        \label{fig:ch2}
\end{figure}

The evolution over time corresponds well to the known behaviour of the Cahn-Hilliard equation \cite{elliott1986, zheng1986, elliott1987}: initially (see Figure \ref{ch1:a}), the two components of the fluid mix ($u\approx 0.5$) and then separate into regions where either $u\approx 0$ or $u\approx 1$ (see Figures \ref{ch1:b}\&\ref{ch2:b}). These nonmonotone metastable states change very slowly as regions annihilate one after the other (see Figures \ref{ch1:c}\&\ref{ch2:c}). Eventually, one arrives at a stable monotone steady state of complete separation (see Figure \ref{ch1:d}).

\subsection{The thin film equation}
We conclude this section with another important fourth-order equation: the \emph{thin film} equation
\begin{align}\label{eq:thinfilm}
\partial_t u=-\partial_x (u\partial_x^3 u),
\end{align}
which generates the \emph{Hele-Shaw} flow, and can be interpreted as Wasserstein gradient flow ($m(z)=z$) of the Dirichlet energy $\ent$ with $G(p)=\frac12 p^2$, $E\equiv 0$ and $V\equiv 0$.

Here, we reproduce the numerical results from \cite{becker2005} (see also \cite{osberger2015}) and choose $\Nt=2$, $\Nx=400$, $\eps=10^{-12}$ and $\tau=10^{-5}$, and the initial datum
\begin{align*}
u_0(x)&=(x-0.5)^4+0.001.
\end{align*}

Figure \ref{fig:tf} shows the spatially discrete function $u_\tau^{\eps,\Delta}(t)$ constructed via the scheme \eqref{eq:mmsdisc} at different time points $t$.
\begin{figure}[H]
\centering
\subfigure[$t=0$.]{\includegraphics[width=0.49\textwidth]{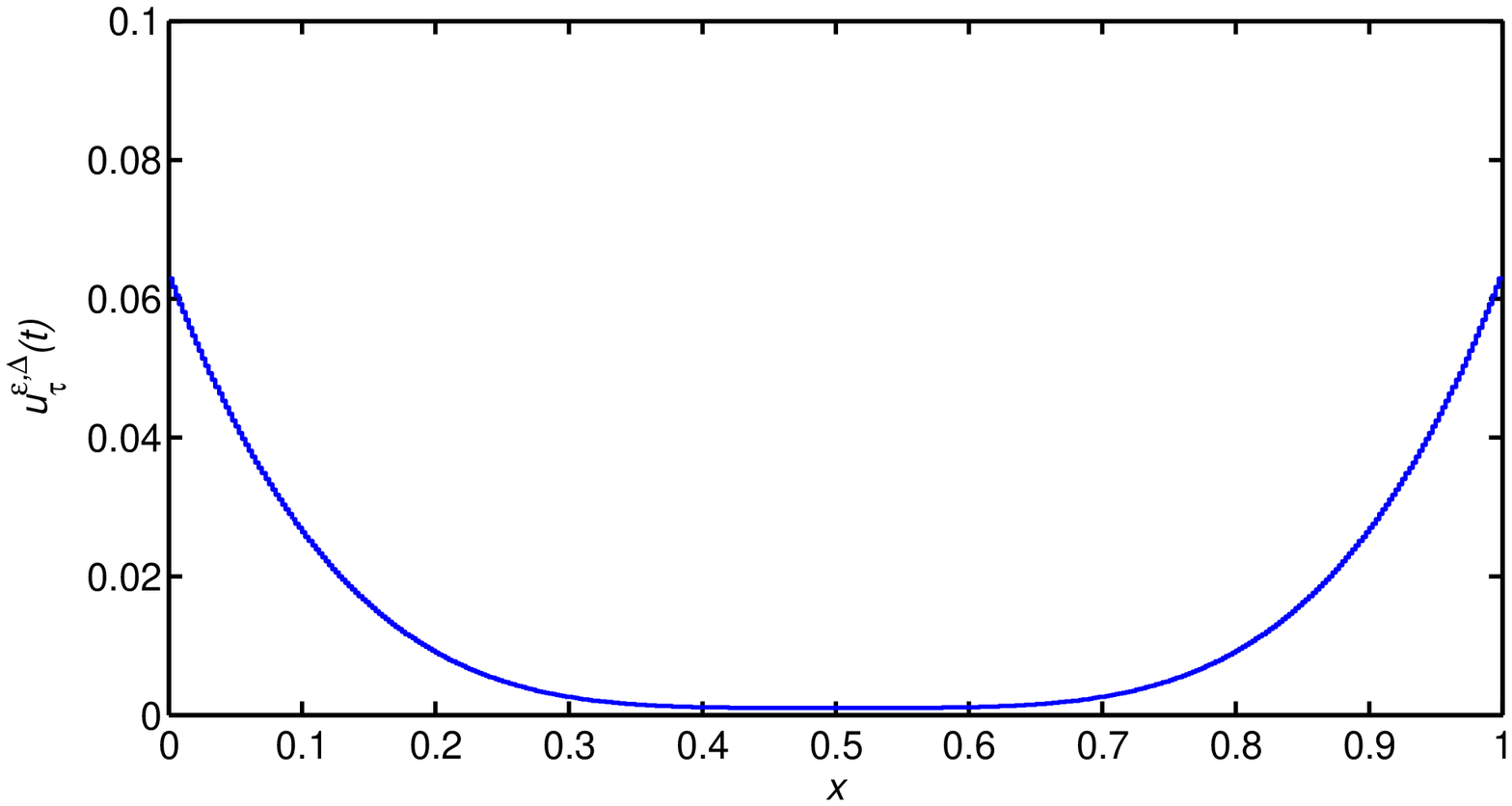}\label{tf:a}}
\subfigure[$t=200\tau=0.002$.]{\includegraphics[width=0.49\textwidth]{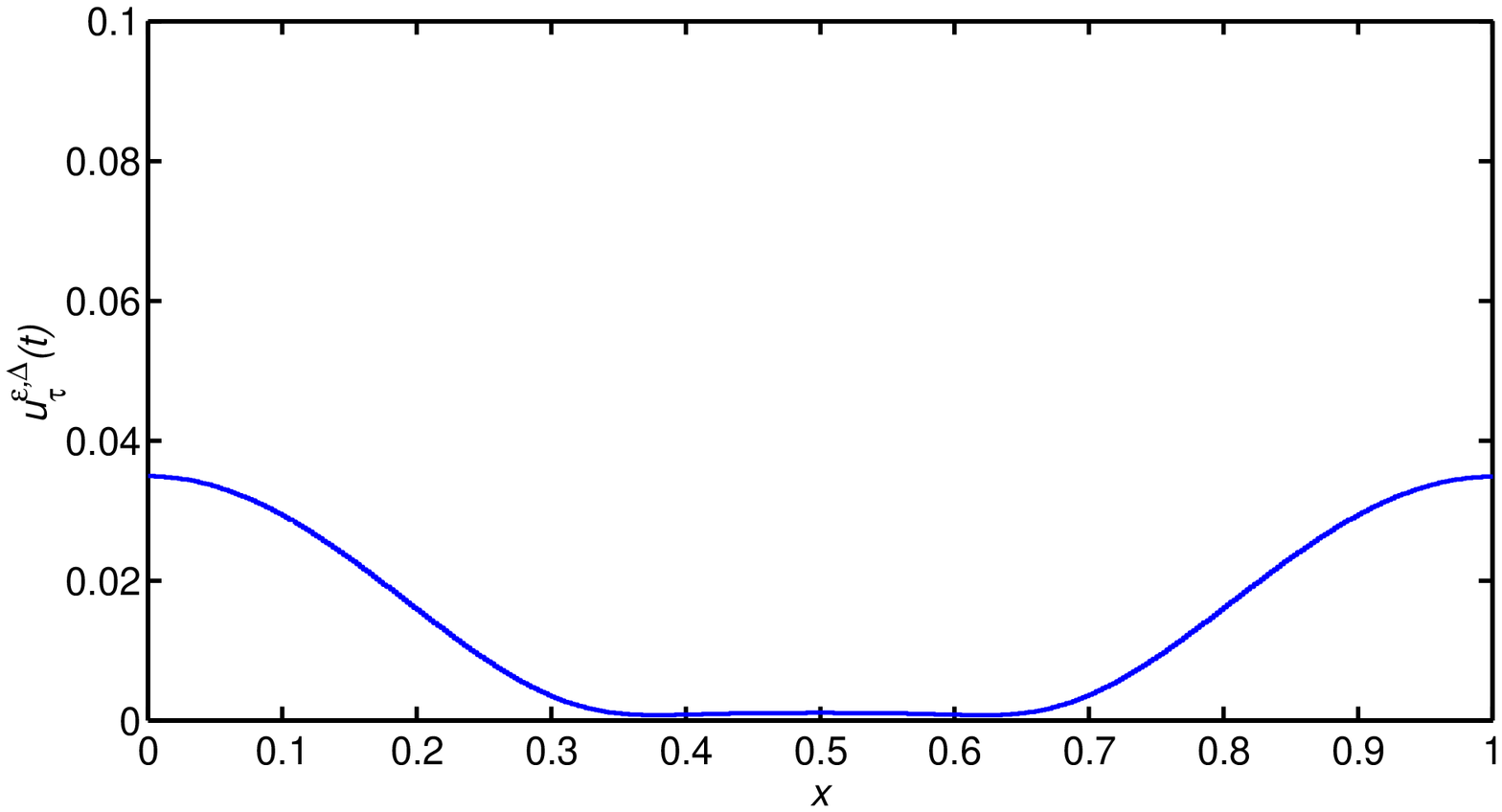}\label{tf:b}}
\subfigure[$t=1200\tau=0.012$.]{\includegraphics[width=0.49\textwidth]{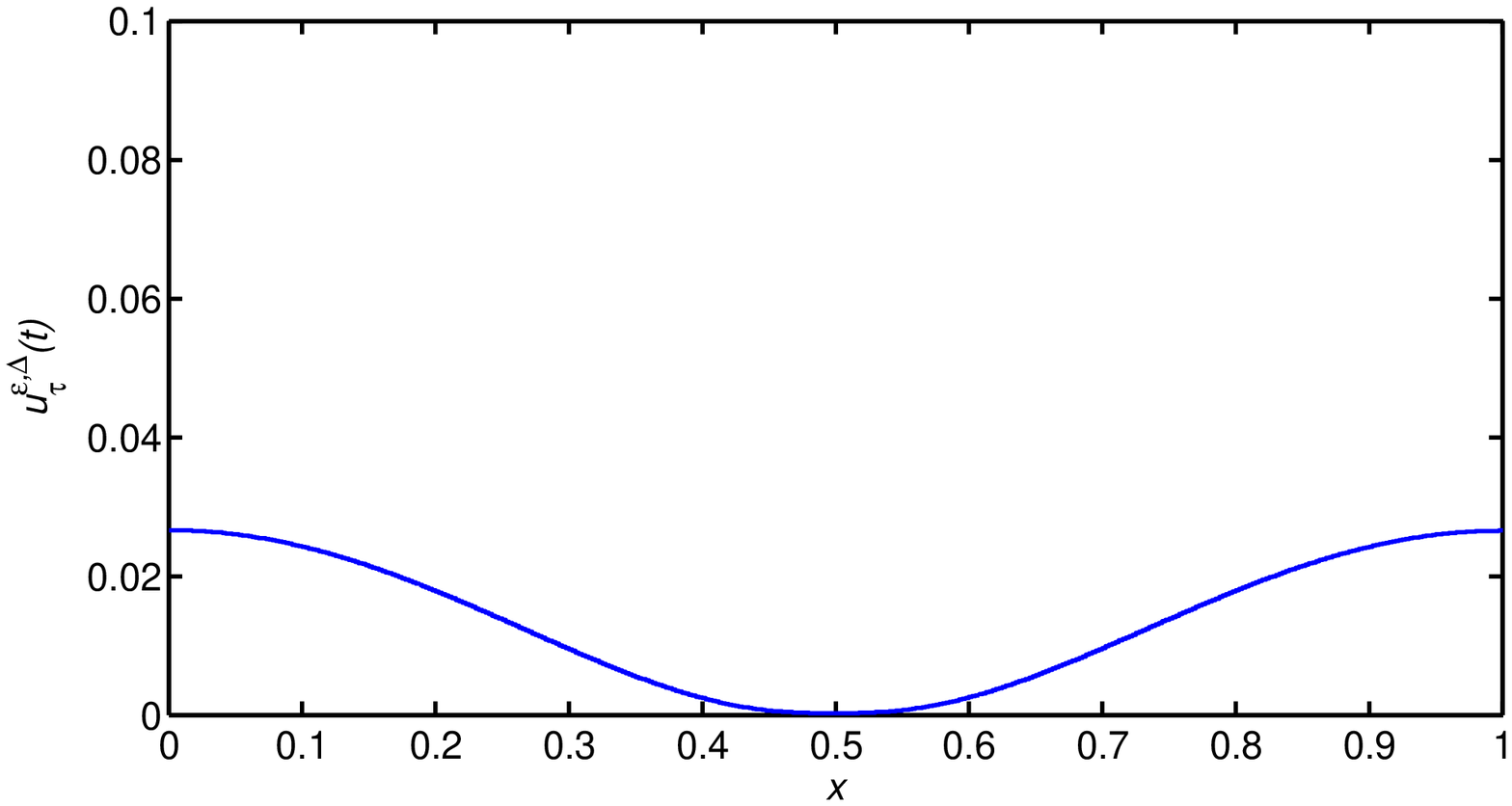}\label{tf:c}}
\subfigure[$t=4000\tau=0.04$.]{\includegraphics[width=0.49\textwidth]{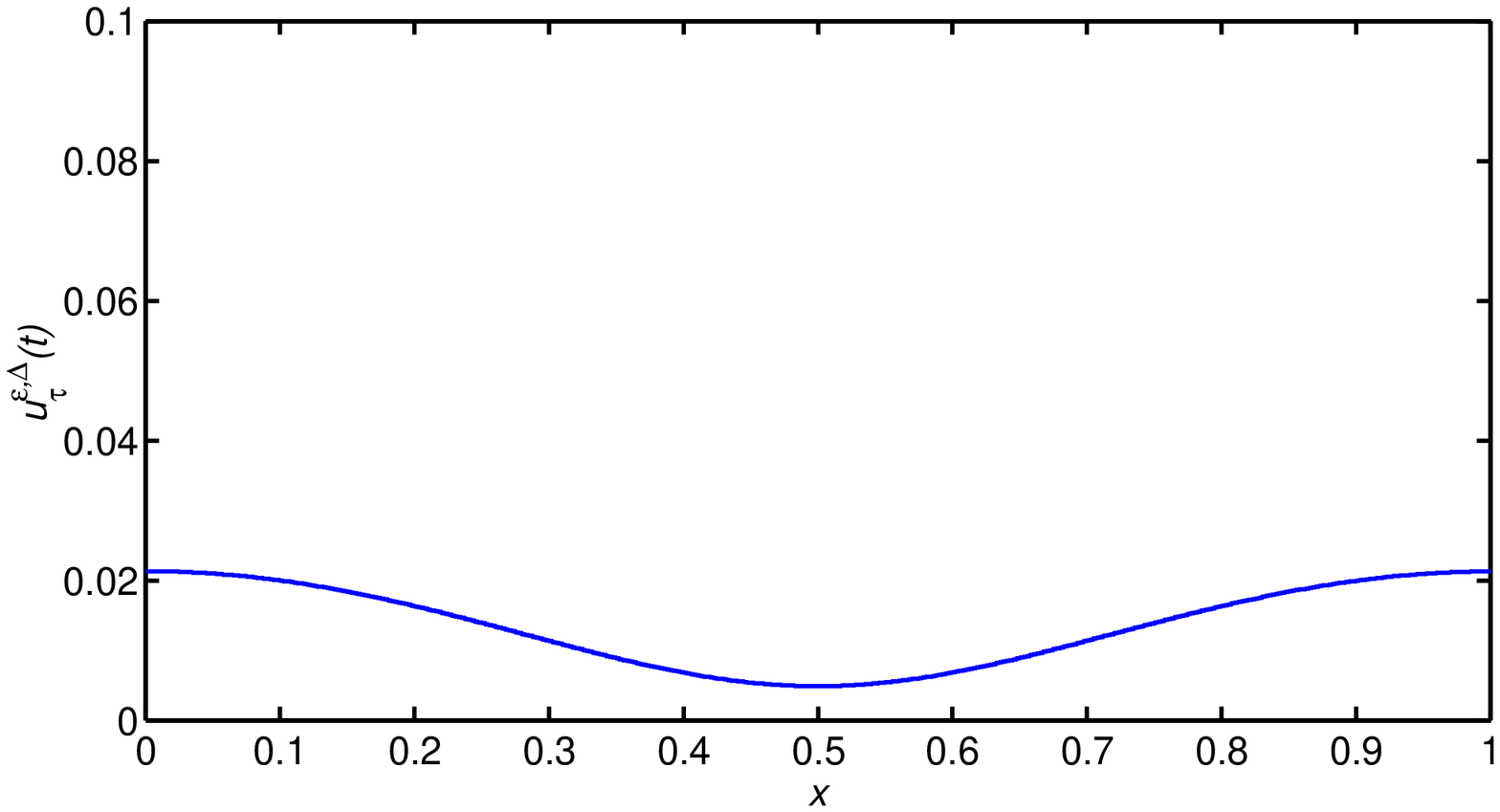}\label{tf:d}}
        \caption{Numerical simulation of $u_\tau^{\eps,\Delta}(t)$ for \eqref{eq:thinfilm}.} 
        \label{fig:tf}
\end{figure}
Our results correspond well to those in \cite{becker2005, osberger2015}: they indicate that at time $t=0.012$ (see Figure \ref{tf:c}), finite-time rupture of the liquid film may occur despite starting with a strictly positive initial datum $u_0$.


\bibliographystyle{abbrv}
   \bibliography{bib}

\end{document}